\def\sgn{\mathop{\rm sgn}}
\def\and{\mathop{\rm and}}
\def\sin{\mathop{\rm sin}}
\def\tanh{\mathop{\rm tanh}}
\newtheorem{assumption}{Assumption}
\newtheorem{corollary}{Corollary}
\newtheorem{lemma}{Lemma}
\newtheorem{definition}{Definition}
\newtheorem{theorem}{Theorem}
\newtheorem{example}{Example}
\newtheorem{remark}{Remark}
\begin{document}

\begin{frontmatter}

\title{Finite-time stability of nonlinear \textcolor{black}{conformable} fractional-order delayed impulsive systems: Impulsive control and perturbation perspectives}

  
\author[a,b]{Lingao Luo}
\ead{lingaoluomail@163.com}
\author[b]{Lulu Li\corref{cor1}}
\ead{lululima@hfut.edu.cn}
\cortext[cor1]{Corresponding author.}
\author[a]{Zhong Liu}
\ead{liuzhong@nudt.edu.cn}
\author[a]{Jianmai Shi}
\ead{jianmaishi@gmail.com}
\address[a]{Laboratory for Big Data and Decision, National University of Defense Technology, Changsha 410073, China}
\address[b]{School of Mathematics, Hefei University of Technology, Hefei 230009, China}

\begin{abstract}
	This paper investigates the finite-time stability (FTS) of nonlinear conformable fractional-order delayed impulsive systems (CFODISs). Using the conformable fractional-order (CFO) derivative framework, we derive a novel FTS result by extending the existing works on continuous integer-order (IO) systems. This result highlights that the settling time of continuous CFO systems depends on the system order and plays a crucial role in discussing FTS scenarios subject to delayed impulses. We establish Lyapunov-based FTS criteria for CFODISs, considering both impulsive control and impulsive perturbation.
	Additionally, we estimate the settling time for both cases, revealing distinct forms compared to the IO case. We apply the theoretical results to delayed impulsive conformable fractional-order memristive neural networks (CFOMNNs) under an elaborately designed controller. 
	We present several simulations to illustrate the validity and applicability of our results.
\end{abstract}

\begin{keyword}
\textcolor{black}{Conformable} fractional-order derivative \sep Finite-time stability \sep Settling-time \sep Impulsive delays \sep Memristive neural networks.
\end{keyword}

\end{frontmatter}


\section{Introduction}
Fractional-order systems (FOSs) are \textcolor{black}{generalizations} of integer-order systems (IOSs) that can capture the memory and hereditary properties of many complex phenomena \cite{33}. FOSs can model various physical, biological, and engineering systems that exhibit anomalous diffusion, long-range dependence, or fractal behavior \cite{26,501}. The dynamic behavior of FOSs has attracted a lot of attention \textcolor{black}{recently}, especially in terms of stability analysis \cite{30} and synchronization analysis \cite{31}.

Most of the existing works on system stability \textcolor{black}{concentrate} on the asymptotic stability, which means that the system can achieve stability only when time \textcolor{black}{approaches} infinity. However, in many \textcolor{black}{practical} situations, it is desirable to achieve stability within a finite time \textcolor{black}{span}, which is called finite-time stability (FTS) \cite{19,6}.
For example, \cite{19} established the FTS Lyapunov-based framework for continuous IOSs and estimated the settling time. 
\cite{6} \textcolor{black}{analyzed} the finite-time synchronization of integer-order \textcolor{black}{(IO)} neural networks \textcolor{black}{(NNs)} using a special control strategy. 
However, these results cannot be \textcolor{black}{directly} applied to FOSs, \textcolor{black}{because of} the \textcolor{black}{differences} between fraction-order \textcolor{black}{(FO)} calculus and \textcolor{black}{IO} calculus. 
Therefore, some researchers have extended the FTS theory to FOSs and obtained some interesting results \cite{3,12}.
For instance, \cite{3} estimated the settling time for continuous FOSs, which showed a different form from the \textcolor{black}{IO} case.
\cite{12} \textcolor{black}{examined} the finite-time synchronization of \textcolor{black}{memristive} fractional-order \textcolor{black}{NNs} (FONNs). However, most of these works ignored the effects of impulses and impulsive delays on the FTS of FOSs.

In reality, many systems may experience sudden changes at some discrete moments, which are called impulsive effects \cite{351}. Impulses can be classified into two types: destabilizing impulses and stabilizing impulses \cite{352}. The stability problems of impulsive FOSs have been investigated in some works \cite{38,39}. Moreover, from the impulsive control perspective, \cite{16,17,18} discussed the finite-time synchronization of FONNs and estimated the settling time. 
However, a natural question arises: how do impulses affect the FTS and settling time of FOSs? It \textcolor{black}{is worth noting} that the existing works \cite{4,15} addressing this question for impulsive IOSs cannot be extended to impulsive FOSs \textcolor{black}{because of} the nonlinearity and complexity of \textcolor{black}{FO} calculus. Therefore, it is necessary to develop \textcolor{black}{new approaches} and criteria to analyze the FTS of impulsive FOSs under different conditions. 

Furthermore, time delays are \textcolor{black}{unavoidable} in the transmission of impulsive signals, often referred to as impulsive delays \cite{36}. The \textcolor{black}{impacts} of delayed impulses on the asymptotic stability of FONN have been \textcolor{black}{investigated} in \cite{37}. 
To the best of our knowledge, there are no existing results regarding the FTS of nonlinear \textcolor{black}{FO} delayed impulsive systems (FODISs). This is a significant and challenging problem, since impulsive delays can significantly affect the system performance and stability. For example, \cite{41} illustrated through simulations that impulsive delays may destroy the synchronization process of delayed reaction-diffusion FONNs, \textcolor{black}{whereas \cite{219} numerically suggested that impulsive delays within a certain range might enhance the stability of Hopfield switched FONN}. Additionally, \cite{42} rigorously established the positive effect of impulsive delays. These works indicate that impulsive delays can have different effects on system dynamics, depending on their magnitude and distribution.

\textcolor{black}{Note that the aforementioned works on FOSs relied on the Caputo and Riemann-Liouville FO derivatives frameworks, which are limited in certain respects. In particular, these FO derivatives do not adhere to fundamental properties like the product, quotient, and chain rules. Furthermore, in the Riemann-Liouville FO framework, the derivative of an arbitrary scalar does not equal zero. Moreover, the Caputo and Riemann-Liouville FO frameworks do not allow us to determine the monotonic behavior of a function solely based on the sign of its derivative. Furthermore, their complex definition structures introduce challenges when applying them in real-world engineering scenarios. Interestingly, Khalil et al. introduced the conformable FO (CFO) calculus in 2014 \cite{200}, offering a simpler FO concept that overcomes these limitations and establishes a closer relationship to classical IO calculus. Subsequently, conformable FOSs (CFOSs), a widely researched topic, have found extensive applications across  various disciplines, such as 
the grey model \cite{204}, 
fault detection observer design \cite{206}, 
neural networks \cite{209} 
and so on. Moreover, the analysis of the dynamical characteristics in CFOSs has garnered increasing interest, encompassing both infinite-time stability \cite{212} and finite-time stability \cite{214,215}. However, the results obtained in \cite{214,215} are inapplicable in demonstrating the FTS of nonlinear CFOSs. Additionally, these results are restricted to scenarios without impulses. While \cite{216,217} considered impulsive effects in investigating the asymptotic stability problem for delayed CFOSs, they did not account for impulsive delays. As far as the authors know, there are no existing works discussing the FTS of nonlinear conformable FODISs (CFODISs). }

Motivated by the \textcolor{black}{aforementioned} discussions, this paper aims to investigate the FTS of nonlinear CFODISs and makes the following contributions:

\begin{itemize}
	\item [$(1)$] \textcolor{black}{As a crucial initial step, we present a new FTS result for continuous CFOSs within the framework of the CFO derivative proposed in \cite{200}. This result highlights that the settling time of CFOSs depends on both the initial value and the system order. Importantly, our work generalizes and improves upon existing renowned results obtained for IOSs and extends them to FOSs. }
	\item [$(2)$] 
	Unlike the existing works on FTS results for continuous IOSs in \cite{19,6} and continuous Caputo FOSs in \cite{3,12}, we take into account the impulsive effects on the FTS of CFODISs.
	Compared with the previous results \cite{16,17,18,15}, we also consider time delays in impulses and guarantee the FTS property of general nonlinear CFODISs from both impulsive control and impulsive perturbation perspectives.
	
	\item [$(3)$] We focus on CFODISs, in contrast to \cite{4}, where impulsive IOSs were considered. We develop several FTS criteria for general CFODISs, which generalize the results in \cite{4} to the delayed impulsive fractional-order scenario. The results show that when the system \textcolor{black}{undergoes} delayed impulses, stabilizing delayed impulses can reduce the settling time and destabilizing delayed impulses can increase the settling time, compared with the case without impulses.
	
	
\end{itemize}

\textit{Notations:} Let $\mathbb{R}$, $\mathbb{R}^{0}$, $\mathbb{R}^{+}$, $\mathbb{R}^n$, and $\mathbb{Z}^{+}$ denote the sets of real numbers, nonnegative real numbers, positive real numbers, $n$-dimensional real vectors, and positive integers, respectively. 
A function $\varphi:\,\mathbb{R}^0\to\mathbb{R}^0$ is said to belong to the class $\mathcal K$ if $\varphi$ is continuous, strictly increasing, and $\varphi(0)=0$. For any $a_1,\,a_2\in\mathbb{R}$, we use $a_1\wedge a_2$ to denote the minimum of $a_1$ and $a_2$. \textcolor{black}{Let $co[a_1,\,a_2]$ denote the closure of the set $[a_1,\,a_2]$, when $a_1<a_2$. }
\textcolor{black}{For any $a\in\mathbb{Z}^{+}$, $\Omega _a=\{1,\,\cdots,\,a\}$. }$||\cdot||$ is the Euclidean norm.

\section{Preliminaries} \label{sec: Preliminaries}

In this section, we review \textcolor{black}{several fundamental} definitions and lemmas 
and present a general nonlinear CFODIS.
\begin{definition}\label{def-fraction-derivative}
\textcolor{black}{\cite{200,201} The $\mathfrak{Q} $-order CFO derivative starting from $t_{\jmath-1}$ of a function $\mathfrak{H}  (t)\,(t\in [t_{\jmath-1},\,t_{\jmath}),\,\jmath\in\mathbb{Z}^{+})$ is defined as
\begin{equation}\label{def-conformablederivative}
_{t_{\jmath-1}}\mathfrak{T} ^{\mathfrak{Q} }_{t} \mathfrak{H} (t)=\lim_{\varepsilon  \to 0} \frac{\mathfrak{H} \left(t+\varepsilon (t- t_{\jmath-1})^{1-\mathfrak{Q} }\right)-\mathfrak{H} (t)}{\varepsilon }, \qquad t\in (t_{\jmath-1},\,t_{\jmath}), \nonumber
\end{equation}
and
\begin{equation}
	_{t_{\jmath-1}}\mathfrak{T} ^{\mathfrak{Q} }_{t_{\jmath-1}} \mathfrak{H} (t_{\jmath-1}) =\lim_{t\to {t_{\jmath-1}^+}} {_{t_{\jmath-1}}\mathfrak{T} ^{\mathfrak{Q} }_{t}} \mathfrak{H} (t), \qquad t=t_{\jmath-1}, \nonumber
\end{equation}
where $\mathfrak{Q} \in (0,\,1]$. If the limits exist, $\mathfrak{H} (t)$ is said to be  $\mathfrak{Q}$-differentiable. }
\end{definition}

\begin{definition}\label{def-fraction-integral}
\textcolor{black}{	\cite{200,201} The $\mathfrak{Q} $-order CFO integral starting from $t_{\jmath-1}$ of a function $\mathfrak{H}  (t)\,(t\in [t_{\jmath-1},\,t_{\jmath}),\,\jmath\in\mathbb{Z}^{+})$ is defined as
	\begin{equation}\label{def-integral}
		 _{t_{\jmath-1}}\mathfrak{T} ^{-\mathfrak{Q} }_{t} \mathfrak{H} (t)=\int_{t_{\jmath-1}}^t{\mathfrak{H} (s)(s-t_{\jmath-1})^{\mathfrak{Q} -1}ds}, \qquad t \in [t_{\jmath-1},\,t_{\jmath}), \nonumber
	\end{equation}
	where $\mathfrak{Q} \in (0,\,1]$. }
\end{definition}

\begin{lemma}\label{lemma-caputo-micidao-xiandaozaiji-absolute-daoshu}
\textcolor{black}{	\cite{200,201,212} Assume that $\mathfrak{Q} \in(0,1]$ and $\mathfrak{H}_1 (t),\, \mathfrak{H}_2 (t)$ are $\mathfrak{Q}$-differentiable for $t\geq t_0$. Then, it holds that
	\begin{itemize}
		\item [$(1)$] $	_{t_{0}}\mathfrak{T}^{\mathfrak{Q} }_{t} t ^{\nu} = \nu t^{\nu- \mathfrak{Q}} $, for arbitrary $\nu\in\mathbb{R}$;
		\item [$(2)$] ${_{t_{0}}\mathfrak{T}^{\mathfrak{Q} }_{t}} \mathfrak{H} (t)=\left\{{_{t_{0}}\mathfrak{T}^{\mathfrak{Q} }_{t}} \mathfrak{H}_1\right\}(\mathfrak{H}_2(t)) \cdot {_{t_{0}}\mathfrak{T}^{\mathfrak{Q} }_{t}} \mathfrak{H}_2(t)\cdot  \mathfrak{H}_2^{ \mathfrak{Q} -1}(t)$, for $\mathfrak{H} (t)=\mathfrak{H}_1 (\mathfrak{H}_2 (t))$;
		\item [$(3)$] $_{t_0}\mathfrak{T}^{-\mathfrak{Q} }_{t}{_{t_{0}}\mathfrak{T}^{\mathfrak{Q} }_{t}} \mathfrak{H}_1 (t)=\mathfrak{H}_1 (t)-\mathfrak{H}_1 (t_0)$;
		\item [$(4)$] ${_{t_{0}}\mathfrak{T}^{\mathfrak{Q} }_{t}} \left( \nu_1 \mathfrak{H}_1 (t) + \nu_2 \mathfrak{H}_2 (t) \right) = \nu_1 \cdot {_{t_{0}}\mathfrak{T}^{\mathfrak{Q} }_{t}} \mathfrak{H}_1 (t) + \nu_2 \cdot {_{t_{0}}\mathfrak{T}^{\mathfrak{Q} }_{t}} \mathfrak{H}_2 (t)$, for arbitrary $\nu_1,\, \nu_2 \in\mathbb{R}$;
		\item [$(5)$] $\mathfrak{H}_1 (t)$ is monotonically nondecreasing on $t$, if ${_{t_{0}}\mathfrak{T}^{\mathfrak{Q} }_{t}} \mathfrak{H}_1 (t) \geq 0$;
		\item [$(6)$] ${_{t_{0}}\mathfrak{T}^{\mathfrak{Q} }_{t}} \{\mathfrak{H}_1^T(t) \mathfrak{H}_1(t) \}= 2 \mathfrak{H}_1^T(t) \cdot {_{t_{0}}\mathfrak{T}^{\mathfrak{Q} }_{t}} \mathfrak{H}_1(t)$. 
	\end{itemize} }
\end{lemma}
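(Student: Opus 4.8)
The plan is to reduce every item to the single structural identity
$${}_{t_0}\mathfrak{T}^{\mathfrak{Q}}_{t} \mathfrak{H}(t) = (t-t_0)^{1-\mathfrak{Q}}\,\mathfrak{H}'(t),$$
valid at every point $t>t_0$ where $\mathfrak{H}$ is ordinarily differentiable, and then to invoke the classical rules of integer-order calculus. First I would derive this identity directly from Definition \ref{def-fraction-derivative}: setting $h=\varepsilon(t-t_0)^{1-\mathfrak{Q}}$ in the difference quotient and noting that $h\to 0$ as $\varepsilon\to 0$ (for $t>t_0$), I rewrite
$$\frac{\mathfrak{H}(t+\varepsilon(t-t_0)^{1-\mathfrak{Q}})-\mathfrak{H}(t)}{\varepsilon} = \frac{\mathfrak{H}(t+h)-\mathfrak{H}(t)}{h}\,(t-t_0)^{1-\mathfrak{Q}},$$
so the limit exists and equals $(t-t_0)^{1-\mathfrak{Q}}\mathfrak{H}'(t)$. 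This is where all the genuine analysis sits; the remaining steps are algebraic bookkeeping. (Here $\mathfrak{Q}$-differentiability for $t>t_0$ is equivalent to classical differentiability, since the factor $(t-t_0)^{1-\mathfrak{Q}}$ never vanishes.)

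With the identity in hand I would dispatch the items in turn. For $(1)$, taking $\mathfrak{H}(t)=t^{\nu}$ with base point $t_0=0$ gives $t^{1-\mathfrak{Q}}\cdot\nu t^{\nu-1}=\nu t^{\nu-\mathfrak{Q}}$. Item $(4)$ follows either from linearity of the limit in the definition or from $(\nu_1\mathfrak{H}_1+\nu_2\mathfrak{H}_2)'=\nu_1\mathfrak{H}_1'+\nu_2\mathfrak{H}_2'$ multiplied by the common factor $(t-t_0)^{1-\mathfrak{Q}}$. Item $(5)$ is immediate: since $(t-t_0)^{1-\mathfrak{Q}}>0$ for $t>t_0$, the hypothesis ${}_{t_0}\mathfrak{T}^{\mathfrak{Q}}_{t}\mathfrak{H}_1(t)\ge 0$ forces $\mathfrak{H}_1'(t)\ge 0$, hence $\mathfrak{H}_1$ is nondecreasing. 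Item $(6)$ is the classical product rule applied to $\mathfrak{H}_1^T\mathfrak{H}_1$, namely $(\mathfrak{H}_1^T\mathfrak{H}_1)'=2\mathfrak{H}_1^T\mathfrak{H}_1'$, again scaled by $(t-t_0)^{1-\mathfrak{Q}}$.

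Items $(2)$ and $(3)$ deserve slightly more care. For the chain rule $(2)$, with $\mathfrak{H}=\mathfrak{H}_1\circ\mathfrak{H}_2$ the identity yields ${}_{t_0}\mathfrak{T}^{\mathfrak{Q}}_{t}\mathfrak{H}(t)=(t-t_0)^{1-\mathfrak{Q}}\mathfrak{H}_1'(\mathfrak{H}_2(t))\,\mathfrak{H}_2'(t)$; I would then expand the right-hand side of the claimed formula, writing $\{{}_{t_0}\mathfrak{T}^{\mathfrak{Q}}_{t}\mathfrak{H}_1\}(\mathfrak{H}_2(t))=\mathfrak{H}_2^{1-\mathfrak{Q}}(t)\,\mathfrak{H}_1'(\mathfrak{H}_2(t))$ and verifying that the stray factors $\mathfrak{H}_2^{1-\mathfrak{Q}}(t)$ and $\mathfrak{H}_2^{\mathfrak{Q}-1}(t)$ cancel, leaving exactly the left-hand side. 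For the fundamental-theorem relation $(3)$, I substitute the identity into the integral of Definition \ref{def-fraction-integral}: the weight $(s-t_0)^{\mathfrak{Q}-1}$ cancels the factor $(s-t_0)^{1-\mathfrak{Q}}$ arising from ${}_{t_0}\mathfrak{T}^{\mathfrak{Q}}_{s}\mathfrak{H}_1(s)$, reducing the expression to $\int_{t_0}^t\mathfrak{H}_1'(s)\,ds=\mathfrak{H}_1(t)-\mathfrak{H}_1(t_0)$ by the ordinary fundamental theorem of calculus.

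The main obstacle is not any single computation but the bookkeeping around the lower endpoint $t=t_0$ and the standing differentiability hypothesis. The identity is derived for $t>t_0$, so $(3)$ requires that $\mathfrak{H}_1'$ remain integrable against the (integrable) singularity of the weight at $s=t_0$, while the chain rule $(2)$ tacitly assumes $\mathfrak{H}_2(t)>0$ so that the noninteger powers $\mathfrak{H}_2^{1-\mathfrak{Q}}$ and $\mathfrak{H}_2^{\mathfrak{Q}-1}$ are well defined. I would state these domain conditions explicitly and otherwise handle the endpoint through the one-sided limit convention of Definition \ref{def-fraction-derivative}.
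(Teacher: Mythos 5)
Your proposal is correct and follows the standard route of the cited sources \cite{200,201}: the paper itself offers no proof of this lemma (it is quoted from the references), and those references establish all six properties exactly as you do, by first showing that for $t>t_0$ the conformable derivative reduces to $(t-t_0)^{1-\mathfrak{Q}}\mathfrak{H}'(t)$ and then importing the classical rules. Your added caveats are also the right ones to flag: item $(1)$ as printed holds only for the base point $t_0=0$ (otherwise one gets $\nu (t-t_0)^{1-\mathfrak{Q}}t^{\nu-1}$), and the chain rule $(2)$ needs $\mathfrak{H}_2(t)>0$ and a consistent convention for the base point of the outer derivative for the powers $\mathfrak{H}_2^{1-\mathfrak{Q}}$ and $\mathfrak{H}_2^{\mathfrak{Q}-1}$ to cancel.
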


%
%

%


Consider the following general nonlinear CFODIS:
\begin{equation} \label{delay-impulse-system}
\left\{
\begin{array}{lcl}
 _{t_{\jmath-1}}\mathfrak{T} ^{\mathfrak{Q} }_{t} \mathcal{S}  (t)=\mathcal{F} (\mathcal{S}  (t)), & t\geq t_0,\,t\in [t_{\jmath-1},\,t_{\jmath}),\\
 \mathcal{S}  (t_{\jmath})=\mathcal{G} _\jmath(\mathcal{S}  (t_{\jmath}^--\tau_\jmath)), & t= t_{\jmath},\,\jmath\in\mathbb{Z}^{+},\\
 \mathcal{S}  (t_0)=\mathcal{S}  _0,
\end{array}
\right.
\end{equation}
where \textcolor{black}{$\mathfrak{Q} \in(0,1]$} is the \textcolor{black}{order of the system}, $\mathcal{S}  \in\mathbb{R}^n$ is the state vector, and $t_0=0$. The system is subject to impulses at discrete moments $\{t_{\jmath}\}_{\jmath\in\mathbb{Z}^{+}}$, where $t_{\jmath-1}\leq t_\jmath-\tau_\jmath < t_\jmath$. The impulses have delays $\tau_{\jmath}\in\mathbb{R}^+$, which satisfy $\tau_\jmath \leq \tau$ for a constant $\tau\in\mathbb{R}^+$.
$\mathcal{F} :{\mathbb{R}}^{n}\to\mathbb{R}^{n}$ and $\mathcal{G} _\jmath(\mathcal{S} (t)):{\mathbb{R}}^{n}\to\mathbb{R}^{n}$ are continuous vector functions with $\mathcal{F} (0)=0$ and $\mathcal{G} _\jmath(0)=0$. Besides, $\mathcal{S}  (t_\jmath^+)=\mathcal{S}  (t_\jmath)$ and $\mathcal{S}  (t_\jmath^-)=\lim_{t\rightarrow t_\jmath^-}\mathcal{S}  (t)$ exists.

\begin{definition}\label{def-positive-definite-function}
\cite{PDF} A continuous function $\mathcal{V}:{\mathbb{R}}^{n}\to\mathbb{R}^{0}$ is called a positive-definite function (PDF) if $\mathcal{V}(0)=0$ and $\mathcal{V}(\mathcal{S}  )>0$ when $\mathcal{S}  \neq 0$.
\end{definition}

\textcolor{black}{For symbolic simplicity, let $\mathcal{V}(t)$ represent $\mathcal{V}(\mathcal{S}(t))$.}

\begin{definition}\label{def-finite-time-stable}
The system (\ref{delay-impulse-system}) is said to achieve finite-time stability (FTS), if there exists a scalar $T_0\in\mathbb{R}^+$ such that
\begin{equation}
	\lim_{t\rightarrow T_0} \mathcal{S}  (t)=0,\nonumber
\end{equation}
and
\begin{equation}
	\mathcal{S}  (t)\equiv 0, \qquad \forall t\geq T_0,\nonumber
\end{equation}
where $T_0$ \textcolor{black}{is} the settling time.
\end{definition}


\begin{lemma}\label{lemma-finite-time-no-impulse}
	Consider the system (\ref{delay-impulse-system}) without impulses. Assume that the CFO derivative of a PDF $\mathcal{V}(t)$ along the state trajectory of the system (\ref{delay-impulse-system}) satisfies the following inequality:
	\begin{equation}\label{lemma-finite-time-no-impulse1}
		_{t_{0}}\mathfrak{T} ^{\mathfrak{Q} }_{t} \mathcal{V}(t)\leq -c \mathcal{V}^\eta(t),\qquad t\geq t_0,
	\end{equation}
	where $c\in\mathbb{R}^+$ and \textcolor{black}{$\eta\in(0,\,1 )$} are constants. Then, the system (\ref{delay-impulse-system}) without impulses can achieve FTS. Moreover, the following inequality holds:
	\begin{equation}
		\mathcal{V}^{1 -\eta}(t)\leq \mathcal{V}^{1 -\eta} (t_0)-\alpha (t-t_0)^\mathfrak{Q} ,\qquad t_0 \leq t \leq \Gamma_{\mathcal{S}_0},\nonumber
	\end{equation}
	and
	\begin{equation}
		\mathcal{V}(t)\equiv 0,\qquad   \forall t\geq \Gamma_{\mathcal{S}_0},\nonumber
	\end{equation}
	where $\Gamma_{\mathcal{S}_0}=t_0+\left(\frac{\mathcal{V}^{1 -\eta} (t_ 0)}{\alpha}\right)^{\frac{1}{\mathfrak{Q} }}$ and \textcolor{black}{$\alpha=\frac{c (1-\eta )}{\mathfrak{Q}}>0$}.
\end{lemma}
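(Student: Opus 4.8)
The plan is to \emph{linearize} the fractional differential inequality (\ref{lemma-finite-time-no-impulse1}) by passing to the auxiliary function $\mathcal{V}^{1-\eta}(t)$, so that a bound of power type $-c\mathcal{V}^{\eta}$ on $\mathcal{V}$ is converted into a \emph{constant} bound on $\mathcal{V}^{1-\eta}$ that can be integrated in closed form. First I would compute the CFO derivative of $\mathcal{V}^{1-\eta}(t)$ using the conformable chain rule, part $(2)$ of Lemma~\ref{lemma-caputo-micidao-xiandaozaiji-absolute-daoshu}, with $\mathfrak{H}_1(x)=x^{1-\eta}$ and $\mathfrak{H}_2(t)=\mathcal{V}(t)$. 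Evaluating $\{{_{t_0}\mathfrak{T}^{\mathfrak{Q}}_t}\mathfrak{H}_1\}(\mathcal{V}(t))=(1-\eta)\mathcal{V}^{1-\eta-\mathfrak{Q}}(t)$ via part $(1)$ and multiplying by the trailing factor $\mathcal{V}^{\mathfrak{Q}-1}(t)$, the $\mathfrak{Q}$-dependent exponents cancel, leaving
\[
  {_{t_0}\mathfrak{T}^{\mathfrak{Q}}_t}\mathcal{V}^{1-\eta}(t)=(1-\eta)\mathcal{V}^{-\eta}(t)\cdot{_{t_0}\mathfrak{T}^{\mathfrak{Q}}_t}\mathcal{V}(t).
\]
Substituting (\ref{lemma-finite-time-no-impulse1}) then yields the constant bound ${_{t_0}\mathfrak{T}^{\mathfrak{Q}}_t}\mathcal{V}^{1-\eta}(t)\leq -c(1-\eta)$, valid on any interval where $\mathcal{V}(t)>0$.

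Next I would integrate this bound. Applying the conformable integral ${_{t_0}\mathfrak{T}^{-\mathfrak{Q}}_t}$ to both sides and invoking part $(3)$ on the left-hand side gives $\mathcal{V}^{1-\eta}(t)-\mathcal{V}^{1-\eta}(t_0)$; on the right-hand side, the nonnegative kernel $(s-t_0)^{\mathfrak{Q}-1}$ from Definition~\ref{def-fraction-integral} both preserves the inequality and integrates the constant to $-c(1-\eta)(t-t_0)^{\mathfrak{Q}}/\mathfrak{Q}=-\alpha(t-t_0)^{\mathfrak{Q}}$. This produces exactly the claimed estimate $\mathcal{V}^{1-\eta}(t)\leq\mathcal{V}^{1-\eta}(t_0)-\alpha(t-t_0)^{\mathfrak{Q}}$.

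Finally, for the settling-time conclusion I would note that the right-hand side of this estimate vanishes precisely at $t=\Gamma_{\mathcal{S}_0}$; since $\mathcal{V}\geq 0$ as a PDF, this forces $\mathcal{V}(\Gamma_{\mathcal{S}_0})=0$, and positive-definiteness gives $\mathcal{S}(\Gamma_{\mathcal{S}_0})=0$. To propagate this for all $t\geq\Gamma_{\mathcal{S}_0}$, I would use that (\ref{lemma-finite-time-no-impulse1}) implies ${_{t_0}\mathfrak{T}^{\mathfrak{Q}}_t}\mathcal{V}(t)\leq 0$, so applying part $(5)$ to $-\mathcal{V}$ shows $\mathcal{V}$ is nonincreasing; being nonnegative and already zero at $\Gamma_{\mathcal{S}_0}$, it remains trapped at $0$, which by Definition~\ref{def-finite-time-stable} establishes FTS with settling time $\Gamma_{\mathcal{S}_0}$. \textbf{The main obstacle} is the bookkeeping in the chain-rule step, namely ensuring that the extra $\mathcal{V}^{\mathfrak{Q}-1}(t)$ factor peculiar to the conformable chain rule cancels cleanly against the exponent produced by part $(1)$, together with the caveat that the constant bound is licit only while $\mathcal{V}>0$; that gap must be closed at the settling time through the monotonicity argument rather than by direct integration.
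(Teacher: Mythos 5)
Your proposal is correct, and its computational core (pass to $\mathcal{V}^{1-\eta}$, cancel the conformable chain-rule factor $\mathcal{V}^{\mathfrak{Q}-1}$ against the exponent from part $(1)$, obtain the constant bound $-c(1-\eta)$, then integrate with the kernel $(s-t_0)^{\mathfrak{Q}-1}$) is exactly the paper's. The difference is in how the two arguments handle the differential \emph{inequality} versus an \emph{equality}. The paper first introduces a comparison function $\mathcal{W}$ solving ${_{t_0}\mathfrak{T}^{\mathfrak{Q}}_t}\mathcal{W}=-c\mathcal{W}^\eta$ with $\mathcal{W}(t_0)=\mathcal{V}(t_0)$, carries out the chain-rule-and-integration computation on $\mathcal{W}$ with equality (so there is never any division by a quantity that might vanish), and then invokes a comparison step via parts $(4)$ and $(5)$ of Lemma~\ref{lemma-caputo-micidao-xiandaozaiji-absolute-daoshu} to conclude $\mathcal{V}\leq\mathcal{W}$ and hence $\mathcal{V}\equiv 0$ past $\Gamma_{\mathcal{S}_0}$ from $0\leq\mathcal{V}\leq\mathcal{W}\leq 0$. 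You instead multiply the inequality directly by the positive factor $(1-\eta)\mathcal{V}^{-\eta}$ and integrate, which is shorter and is in fact precisely what the paper itself does later in the proof of Theorem~\ref{theo-finite-stable-impulse}; the price is the caveat you correctly flag, that the manipulation is licit only on intervals where $\mathcal{V}>0$, which you close with the monotonicity of $\mathcal{V}$ from part $(5)$ applied to $-\mathcal{V}$ (and the same monotonicity argument shows the claimed bound holds trivially if $\mathcal{V}$ happens to vanish before $\Gamma_{\mathcal{S}_0}$). Your direct route buys simplicity and avoids the paper's comparison step, whose justification of ${_{t_0}\mathfrak{T}^{\mathfrak{Q}}_t}(\mathcal{W}-\mathcal{V})\geq 0$ is itself delicate since it tacitly requires an ordering of $\mathcal{V}^\eta$ and $\mathcal{W}^\eta$; the paper's route buys the formal convenience of only ever differentiating a genuine solution of the scalar equation.
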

\begin{proof}
\textcolor{black}{	Consider the following differential equation
	\begin{equation}\label{proof-finite-time-no-impulse:2}
		_{t_{0}}\mathfrak{T} ^{\mathfrak{Q} }_{t} \mathcal{W}(t) = -c \mathcal{W}^\eta(t),\qquad t\geq t_0,
	\end{equation}
	with an initial value $\mathcal{W}(t_0) = \mathcal{V}(t_0)$.
%
	Multiplying both sides of (\ref{proof-finite-time-no-impulse:2}) by $(1-\eta ) \mathcal{W}^{-\eta }(t )$ generates that
	\begin{equation}\label{proof-finite-time-no-impulse:1}
	(1-\eta ) \mathcal{W}^{-\eta }(t ) \cdot	{_{t_{0}}\mathfrak{T} ^{\mathfrak{Q} }_{t}} \mathcal{W}(t) = -c (1-\eta ).
	\end{equation}
	Based on Lemma \ref{lemma-caputo-micidao-xiandaozaiji-absolute-daoshu}. (1) and (2), we have
	\begin{align}
		{_{t_{0}}\mathfrak{T} ^{\mathfrak{Q} }_{t}} \mathcal{W}^{1-\eta }(t) &= {_{t_{0}}\mathfrak{T} ^{\mathfrak{Q} }_{t}} t^{1-\eta } |_{t=\mathcal{W}(t)} \cdot {_{t_{0}}\mathfrak{T} ^{\mathfrak{Q} }_{t}} \mathcal{W}(t) \cdot \mathcal{W}^{\mathfrak{Q}-1}(t) \nonumber \\
		&= (1-\eta) \mathcal{W}^{1-\eta - \mathfrak{Q} }(t) \cdot {_{t_{0}}\mathfrak{T} ^{\mathfrak{Q} }_{t}} \mathcal{W}(t) \cdot \mathcal{W}^{ \mathfrak{Q} -1}(t) \nonumber \\
		&= (1-\eta) \mathcal{W}^{-\eta  }(t) \cdot {_{t_{0}}\mathfrak{T} ^{\mathfrak{Q} }_{t}} \mathcal{W}(t), \nonumber
	\end{align}
	which, together with (\ref{proof-finite-time-no-impulse:1}), yields that
	\begin{equation}\label{proof-finite-time-no-impulse:0}
		{_{t_{0}}\mathfrak{T} ^{\mathfrak{Q} }_{t}} \mathcal{W}^{1-\eta }(t)= -c (1-\eta ).
	\end{equation}
    Taking the $\mathfrak{Q} $-order CFO integral on both sides of (\ref{proof-finite-time-no-impulse:0}) from $t_0$ to $t$, and applying Lemma \ref{lemma-caputo-micidao-xiandaozaiji-absolute-daoshu}. (3), we get
	\begin{align}\label{proof-finite-time-no-impulse:3}
		\mathcal{W}^{1-\eta }(t) -\mathcal{W}^{1-\eta }(t_0) &= \int_{t_{0}}^t{[-c (1-\eta )](s-t_{0})^{\mathfrak{Q} -1}ds} \nonumber \\
		& = -\frac{c (1-\eta )}{\mathfrak{Q}} (t-t_0)^\mathfrak{Q} \nonumber \\
		& \triangleq -\alpha (t-t_0)^\mathfrak{Q},
	\end{align}
	where $\alpha=\frac{c (1-\eta )}{\mathfrak{Q}}>0$. Let $\mathcal{H} (t)= \mathcal{W}^{1-\eta }(t_0) - \alpha (t-t_0)^\mathfrak{Q}$. Note that $\alpha >0$ implies the strictly monotonically decreasing property of $\mathcal{H} (t)$. It is evident that $t=\Gamma_{\mathcal{S}_0}$ uniquely satisfies $\mathcal{H} (t) =0$, where $\Gamma_{\mathcal{S}_0}=t_0+\left(\frac{\mathcal{V}^{1 -\eta} (t_ 0)}{\alpha}\right)^{\frac{1}{\mathfrak{Q} }}$. In addition, one has $\mathcal{H} (t) \leq 0$, for any $t\geq \Gamma_{\mathcal{S}_0} $, implying that $\mathcal{W}(t ) \leq 0$ holds for all $t\geq \Gamma_{\mathcal{S}_0}$. On the other hand, according to Lemma \ref{lemma-caputo-micidao-xiandaozaiji-absolute-daoshu}. (4), one has ${_{t_{0}}\mathfrak{T} ^{\mathfrak{Q} }_{t}} ( \mathcal{W}(t ) - \mathcal{V}(t )) \geq 0$. It follows from Lemma \ref{lemma-caputo-micidao-xiandaozaiji-absolute-daoshu}. (5) that $\mathcal{W}(t ) - \mathcal{V}(t ) \geq \mathcal{W}(t_0 ) - \mathcal{V}(t _0) $, which directly deduces that $\mathcal{W}(t ) \geq \mathcal{V}(t )$. Due to the positive definiteness of $\mathcal{V}(t )$, we can conclude that $\mathcal{W}(t ) \geq 0$ for any $t\geq \Gamma_{\mathcal{S}_0}$. Therefore, we deduce that $\mathcal{W}(t ) \equiv  0, \, \forall t\geq \Gamma_{\mathcal{S}_0}$. As a result, we establish that 
	\begin{equation}
		\mathcal{V}^{1 -\eta}(t)\leq \mathcal{W}^{1 -\eta}(t) =\mathcal{V}^{1 -\eta} (t_0)-\alpha (t-t_0)^\mathfrak{Q} ,\qquad t_0 \leq t \leq \Gamma_{\mathcal{S}_0},\nonumber
	\end{equation}
	and simultaneously,
	\begin{equation}
		\mathcal{V}(t)\equiv 0,\qquad   \forall t\geq \Gamma_{\mathcal{S}_0},\nonumber
	\end{equation}
	which completes the proof. }
\end{proof}

\begin{remark}\label{remark-no-impulse}
	\textcolor{black}{When the system order satisfies $\mathfrak{Q}=1$, (\ref{lemma-finite-time-no-impulse1}) simplifies to the classical IO differential inequality in \cite{19,218}. In those works, FTS of continuous IOSs was addressed and the settling time was determined for a given initial state. In the case of order-dependent systems, the settling time naturally depends on the system order. Lemma \ref{lemma-finite-time-no-impulse} builds upon the concepts from \cite{19,218}, transitioning from continuous IOSs to CFOSs, and provides a more adaptable estimation of the settling time in the fractional-order context. }
\end{remark}

\section{Main results}\label{sec: Main results}
In this section, 
we derive the FTS criteria of CFODIS (\ref{delay-impulse-system}) from both impulsive control and impulsive perturbation perspectives. First, we consider the case where CFODIS (\ref{delay-impulse-system}) is subject to stabilizing delayed impulses.
\begin{theorem}\label{theo-finite-stable-impulse}
Suppose that 
there exists a piecewise right-continuous PDF $\mathcal{V}(t) \triangleq \mathcal{V}(\mathcal{S}(t))$ satisfying 
\begin{equation}\label{theo-stable-impulse-V-K-function}
\pi_1(||\mathcal{S}||)\leq \mathcal{V}(t)\leq \pi_2(||\mathcal{S}||),
\end{equation}
\begin{equation} \label{theo-stable-impulse-V-yuanxitiong}
 _{t_{\jmath-1}}\mathfrak{T} ^{\mathfrak{Q}  }_{t} \mathcal{V}(t)\leq -c \mathcal{V}^\eta(t), \qquad t\geq t_0,\,t\in [t_{\jmath-1},\,t_{\jmath}),
\end{equation}
\begin{equation} \label{theo-stable-impulse-V-impulse-instant}
 \mathcal{V}(t_{\jmath})=\mathcal{V}(\mathcal{G}_\jmath(\mathcal{S}(t_{\jmath}^--\tau_\jmath)))\leq \beta_\jmath^{\frac{1}{1 -\eta}}\mathcal{V}( t_{\jmath}^--\tau_\jmath), \qquad t= t_{\jmath},\,\jmath\in\mathbb{Z}^{+},
\end{equation}
where functions $\pi_1,\,\pi_2\in\mathcal K$, \textcolor{black}{scalars} $c>0,
\,\textcolor{black}{\eta\in(0,\,1 )},\,\beta\in(0,\,1)$, and $\beta_\jmath\in(0,\,\beta]$. Then, the system (\ref{delay-impulse-system}) can achieve FTS if there \textcolor{black}{exist} $N$ impulses
on $[0,\,\Gamma_{\mathcal{S}_0})$ and the $N$-th impulsive instant satisfies
\begin{equation}\label{theo-stable-impulse-impulse-instant}
t_N\leq \bigg(\frac{\gamma^{N}(1-\frac{\beta}{\gamma})}{1-\beta}\Gamma_{\mathcal{S}_0}^\mathfrak{Q} -\frac{\beta}{1-\beta}\tau^\mathfrak{Q} \bigg)^{\frac{1}{\mathfrak{Q} }},
\end{equation}
and
\begin{equation}\label{theo-stable-impulse-gamma-condition}
\gamma^{N}(1-\frac{\beta}{\gamma})\Gamma_{\mathcal{S}_0}^\mathfrak{Q} -\beta\tau^\mathfrak{Q} >0,
\end{equation}
where $\gamma\in(\beta,\,1)$. Moreover, the settling time is $T_1=(\gamma^N)^\frac{1}{\mathfrak{Q} }\Gamma_{\mathcal{S}_0}$, where $\Gamma_{\mathcal{S}_0}$ is defined in Lemma \ref{lemma-finite-time-no-impulse}.
\end{theorem}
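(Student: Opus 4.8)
The plan is to collapse the impulsive differential inequality onto a scalar comparison in the variable $W(t)\triangleq\mathcal{V}^{1-\eta}(t)$, mirroring the computation already carried out in Lemma \ref{lemma-finite-time-no-impulse}. On each impulse-free interval $[t_{\jmath-1},t_\jmath)$, hypothesis (\ref{theo-stable-impulse-V-yuanxitiong}) together with Lemma \ref{lemma-finite-time-no-impulse} (applied with base point $t_{\jmath-1}$) gives the flow estimate $W(t)\le W(t_{\jmath-1})-\alpha(t-t_{\jmath-1})^{\mathfrak{Q}}$ with $\alpha=\frac{c(1-\eta)}{\mathfrak{Q}}$; raising the jump condition (\ref{theo-stable-impulse-V-impulse-instant}) to the power $1-\eta$ (a monotone map, since $1-\eta>0$) turns it into $W(t_\jmath)\le\beta_\jmath\,W(t_\jmath-\tau_\jmath)$. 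The structural point I would exploit is that the delayed sample $t_\jmath-\tau_\jmath$ lies in $[t_{\jmath-1},t_\jmath)$, so its value is itself governed by the flow estimate.

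Substituting the flow estimate at $t=t_\jmath-\tau_\jmath$ into the jump inequality yields $W(t_\jmath)\le\beta_\jmath\big(W(t_{\jmath-1})-\alpha(t_\jmath-\tau_\jmath-t_{\jmath-1})^{\mathfrak{Q}}\big)$. Because the conformable kernel restarts as $(t-t_{\jmath-1})^{\mathfrak{Q}}$ after every impulse, the per-interval decrements must be stitched together; here I would use the subadditivity $(a+b)^{\mathfrak{Q}}\le a^{\mathfrak{Q}}+b^{\mathfrak{Q}}$ valid for $\mathfrak{Q}\in(0,1]$ to obtain $(t_\jmath-\tau_\jmath-t_{\jmath-1})^{\mathfrak{Q}}\ge t_\jmath^{\mathfrak{Q}}-t_{\jmath-1}^{\mathfrak{Q}}-\tau^{\mathfrak{Q}}$, which linearizes the increments in $t_\jmath^{\mathfrak{Q}}$ and isolates the entire delay loss as one $\tau^{\mathfrak{Q}}$ penalty. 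Setting $U_\jmath\triangleq W(t_\jmath)+\alpha t_\jmath^{\mathfrak{Q}}$ then collapses everything to the affine recursion $U_\jmath\le\beta_\jmath U_{\jmath-1}+(1-\beta_\jmath)\alpha t_\jmath^{\mathfrak{Q}}+\beta_\jmath\alpha\tau^{\mathfrak{Q}}$.

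Using $\beta_\jmath\le\beta<\gamma<1$, I would prove by induction on $\jmath$ that $U_\jmath\le\gamma^\jmath U_0$, where $U_0=\mathcal{V}^{1-\eta}(t_0)=\alpha\Gamma_{\mathcal{S}_0}^{\mathfrak{Q}}$. Feeding $U_{\jmath-1}\le\gamma^{\jmath-1}U_0$ into the recursion, the step closes precisely when $(1-\beta_\jmath)t_\jmath^{\mathfrak{Q}}+\beta_\jmath\tau^{\mathfrak{Q}}\le\gamma^{\jmath-1}(\gamma-\beta_\jmath)\Gamma_{\mathcal{S}_0}^{\mathfrak{Q}}$. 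Since $t_\jmath\le t_N$, $\gamma^{\jmath-1}\ge\gamma^{N-1}$, and a short monotonicity check in the free parameter $\beta_\jmath\in(0,\beta]$ shows every one of these inequalities is implied by the single strongest one, obtained at $\jmath=N$ and $\beta_\jmath=\beta$, which is exactly (\ref{theo-stable-impulse-impulse-instant}); condition (\ref{theo-stable-impulse-gamma-condition}) is used only to guarantee that the admissible range for $t_N$ in (\ref{theo-stable-impulse-impulse-instant}) is nonempty (its right-hand side positive). This delivers $U_N\le\gamma^N U_0$, i.e. $W(t_N)\le\alpha\big(T_1^{\mathfrak{Q}}-t_N^{\mathfrak{Q}}\big)$ with $T_1^{\mathfrak{Q}}=\gamma^N\Gamma_{\mathcal{S}_0}^{\mathfrak{Q}}$.

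To finish, note $\gamma<1$ forces $t_N\le T_1<\Gamma_{\mathcal{S}_0}\le t_{N+1}$, so $T_1$ sits in the last impulse-free interval, where the flow estimate gives $W(T_1)\le W(t_N)-\alpha(T_1-t_N)^{\mathfrak{Q}}$; applying subadditivity once more as $(T_1-t_N)^{\mathfrak{Q}}\ge T_1^{\mathfrak{Q}}-t_N^{\mathfrak{Q}}$ together with the bound on $W(t_N)$ yields $W(T_1)\le0$, hence $\mathcal{V}(T_1)=0$, and by (\ref{theo-stable-impulse-V-K-function}) with $\pi_1\in\mathcal{K}$, $\mathcal{S}(T_1)=0$. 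Persistence $\mathcal{S}(t)\equiv0$ for $t\ge T_1$ then follows from the absorbing-at-zero conclusion of Lemma \ref{lemma-finite-time-no-impulse} on the terminal interval, combined with $\mathcal{F}(0)=0$ and $\mathcal{G}_\jmath(0)=0$ keeping the origin invariant once the delayed samples have vanished. I expect the delay to be the genuine obstacle: the jump contracts the \emph{earlier} value $W(t_\jmath-\tau_\jmath)$ rather than $W(t_\jmath^-)$, and the conformable kernel resets at each impulse, so a clean global estimate survives only after the subadditivity bookkeeping, with the slack $\gamma\in(\beta,1)$ providing exactly the room needed to absorb the accumulated $t_N^{\mathfrak{Q}}$- and $\tau^{\mathfrak{Q}}$-penalties and close the induction.
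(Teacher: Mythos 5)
Your proposal is correct and follows essentially the same route as the paper's proof: both reduce to $\mathcal{V}^{1-\eta}$ via the conformable chain rule, integrate on each impulse-free interval, use subadditivity of $x\mapsto x^{\mathfrak{Q}}$ to absorb the kernel restarts and the delay into a single $\tau^{\mathfrak{Q}}$ penalty, and close an induction giving a geometric contraction by $\gamma$ per impulse, with (\ref{theo-stable-impulse-impulse-instant}) supplying exactly the worst-case inequality at $\jmath=N$, $\beta_\jmath=\beta$. Your repackaging of the induction as the discrete recursion on $U_\jmath=\mathcal{V}^{1-\eta}(t_\jmath)+\alpha t_\jmath^{\mathfrak{Q}}$ is equivalent to the paper's continuous-time bound $\mathcal{V}^{1-\eta}(t)\le\gamma^{\jmath}\mathcal{V}^{1-\eta}(t_0)-\alpha t^{\mathfrak{Q}}$ on $[t_\jmath,t_{\jmath+1})$, and your treatment of persistence past $T_1$ is at the same level of rigor as the paper's.
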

\begin{proof}
Without loss of generality, suppose that $\mathcal{S}_0\neq 0$. For $t\in [t_{\jmath-1},\,t_{\jmath}),\,\jmath\in\mathbb{Z}^{+}$, 
by multiplying $   (1-\eta )      {\mathcal{V}}^{-\eta}(t)$ on both sides of (\ref{theo-stable-impulse-V-yuanxitiong}), 
together with Lemma \ref{lemma-caputo-micidao-xiandaozaiji-absolute-daoshu}. (1) and (2), one has 
\begin{equation}\label{proof-stable-impulse-Vyijian-micidao}
{_{t_{\jmath-1}}\mathfrak{T} ^{\mathfrak{Q}  }_{t}}  {\mathcal{V}}^{1 -\eta}(t)\le  -   c (1-\eta ) .
\end{equation}
Based on Lemma \ref{lemma-caputo-micidao-xiandaozaiji-absolute-daoshu}. (3), integrating both sides of (\ref{proof-stable-impulse-Vyijian-micidao}) yields that
\begin{align}\label{proof-stable-impulse-V-fangsuo}
{ {\mathcal{V}}}^{1 -\eta}(t)-{ {\mathcal{V}}}^{1 -\eta}( t_{\jmath-1} )
&\le -\frac{c (1-\eta )}{\mathfrak{Q}} (t-t_{\jmath-1})^\mathfrak{Q} \nonumber\\
&\triangleq -\alpha (t-t_{\jmath-1})^\mathfrak{Q} , \qquad t\in [t_{\jmath-1},\,t_{\jmath}),
\end{align}
where $\alpha=\frac{c (1-\eta )}{\mathfrak{Q}} $. 
Based on (\ref{proof-stable-impulse-V-fangsuo}), one has
\begin{equation}
{\mathcal{V}}^{1 -\eta}( t )\leq { {\mathcal{V}}}^{1 -\eta}(t_{0})-\alpha t^\mathfrak{Q} ,\qquad t\in[0,\,(t_1-\tau_1)\wedge \Gamma_{\mathcal{S}_0}).\nonumber
\end{equation}
If $t_1-\tau_1\geq \Gamma_{\mathcal{S}_0}$, there is no impulse before $\Gamma_{\mathcal{S}_0}$. 
Hence, the system (\ref{delay-impulse-system}) has achieved FTS before $t_1-\tau_1$ and the impulses \textcolor{black}{have} no \textcolor{black}{influence} on the FTS of the system.
On the other hand, when $t_1-\tau_1< \Gamma_{\mathcal{S}_0}$, 
there are $N$ impulses on $[0,\,\Gamma_{\mathcal{S}_0})$, i.e., $0<t_1<\cdots<t_N<\Gamma_{\mathcal{S}_0}$. Then, it follows from (\ref{theo-stable-impulse-impulse-instant}) that
\begin{equation}\label{proof-stable-impulse-impulse-instant1}
t_\jmath\le t_N \le \bigg(\frac{\gamma^{N}(1-\frac{\beta}{\gamma})}{1-\beta}\Gamma_{\mathcal{S}_0}^\mathfrak{Q} -\frac{\beta}{1-\beta}\tau^\mathfrak{Q}  \bigg)^{\frac{1}{\mathfrak{Q} }}\le \bigg(\frac{\gamma^{\jmath}(1-\frac{\beta}{\gamma})}{1-\beta}\Gamma_{\mathcal{S}_0}^\mathfrak{Q} -\frac{\beta}{1-\beta}\tau^\mathfrak{Q}  \bigg)^{\frac{1}{\mathfrak{Q} }},\, \jmath \in \Omega _N, 
\end{equation}
i.e.,
\begin{equation}\label{proof-stable-impulse-impulse-instant2}
\beta \gamma ^{\jmath-1}+\frac{t_\jmath^\mathfrak{Q} }{\Gamma _{\mathcal{S}_0}^\mathfrak{Q} } (1-\beta )+\frac{\beta \tau ^\mathfrak{Q} }{\Gamma _{\mathcal{S}_0}^\mathfrak{Q} }\le \gamma ^\jmath,\, \jmath \in \Omega _N, 
\end{equation}
By (\ref{proof-stable-impulse-V-fangsuo}), one has
\begin{equation}\label{proof-stable-impulse-V-fangsuo-0-1}
{\mathcal{V}}^{1 -\eta}( t )\leq { {\mathcal{V}}}^{1 -\eta}(t_{0})-\alpha t^\mathfrak{Q} ,\qquad t\in[0,\,t_{1}).
\end{equation}
When $t\in [t_1,\,t_2)$, it follows from (\ref{theo-stable-impulse-V-impulse-instant}), (\ref{proof-stable-impulse-V-fangsuo}),  (\ref{proof-stable-impulse-impulse-instant2}), and (\ref{proof-stable-impulse-V-fangsuo-0-1})
that
\begin{align}\label{proof-stable-impulse-V-fangsuo-1-2}
{\mathcal{V}}^{1 -\eta}( t )&\leq \mathcal{V}^{1 -\eta}( t_{1} )-\alpha (t-t_{1})^\mathfrak{Q} \nonumber\\
&\le \beta _1 \mathcal{V}^{1 -\eta}( t_{1}^--\tau _1 )-\alpha (t-t_{1})^\mathfrak{Q} \nonumber \\
&\le \beta [\mathcal{V}^{1 -\eta}(t_0)-\alpha (t_{1}^--\tau _1)^\mathfrak{Q} ]-\alpha (t-t_{1})^\mathfrak{Q} \nonumber \\
&\le \beta \mathcal{V}^{1 -\eta}(t_0)-\alpha\beta  (t_{1}^\mathfrak{Q} -\tau _1^\mathfrak{Q} )-\alpha (t^\mathfrak{Q} -t_{1}^\mathfrak{Q} )\nonumber \\
&\le \beta \mathcal{V}^{1 -\eta}(t_0)+\alpha \beta \tau^\mathfrak{Q} +\alpha (1-\beta )t_{1}^\mathfrak{Q} -\alpha t^\mathfrak{Q} \nonumber \\
&= \left[\beta +\frac{t_{1}^\mathfrak{Q} }{\Gamma _{\mathcal{S}_0}^\mathfrak{Q} }(1-\beta )+\frac{\beta \tau^\mathfrak{Q} }{\Gamma _{\mathcal{S}_0}^\mathfrak{Q} } \right]\mathcal{V}^{1 -\eta}(t_0) -\alpha t^\mathfrak{Q} \nonumber \\
&\le \gamma {\mathcal{V}}^{1 -\eta}(t_0)-\alpha t^\mathfrak{Q}. 
\end{align}
Similarly, for $t\in [t_\jmath,\,t_{\jmath+1}),\,\textcolor{black}{\jmath \in \Omega _N}$,  
one has
\begin{align}\label{proof-stable-impulse-V-fangsuo-j-(j+1)}
{\mathcal{V}}^{1 -\eta}( t )&\leq \mathcal{V}^{1 -\eta}( t_{\jmath} )-\alpha (t-t_{\jmath})^\mathfrak{Q} \nonumber\\
&\le \beta _\jmath \mathcal{V}^{1 -\eta}( t_{\jmath}^--\tau _\jmath )-\alpha (t-t_{\jmath})^\mathfrak{Q} \nonumber \\
&\le \beta [\gamma ^{\jmath-1} \mathcal{V}^{1 -\eta}(t_0)-\alpha (t_{\jmath}^--\tau _\jmath)^\mathfrak{Q} ]-\alpha (t-t_{\jmath})^\mathfrak{Q} \nonumber \\
&\le \beta\gamma ^{\jmath-1} \mathcal{V}^{1 -\eta}(t_0)-\alpha\beta  (t_{\jmath}^\mathfrak{Q} -\tau _\jmath^\mathfrak{Q} )-\alpha (t^\mathfrak{Q} -t_{\jmath}^\mathfrak{Q} )\nonumber \\
&\le \beta\gamma ^{\jmath-1} \mathcal{V}^{1 -\eta}(t_0)+\alpha \beta \tau^\mathfrak{Q} +\alpha (1-\beta )t_{\jmath}^\mathfrak{Q} -\alpha t^\mathfrak{Q} \nonumber \\
&= \left[\beta\gamma ^{\jmath-1} +\frac{t_{\jmath}^\mathfrak{Q} }{\Gamma _{\mathcal{S}_0}^\mathfrak{Q} }(1-\beta )+\frac{\beta \tau^\mathfrak{Q} }{\Gamma _{\mathcal{S}_0}^\mathfrak{Q} } \right]\mathcal{V}^{1 -\eta}(t_0) -\alpha t^\mathfrak{Q} \nonumber \\
&\le \gamma ^{\jmath} {\mathcal{V}}^{1 -\eta}(t_0)-\alpha t^\mathfrak{Q} ,
\end{align}
where $t_{N+1}\triangleq (\gamma^N)^\frac{1}{\mathfrak{Q} }\Gamma_{\mathcal{S}_0}$. 
Furthermore, it follows from (\ref{proof-stable-impulse-impulse-instant1}) that 
\begin{align}\label{proof-stable-impulse-impulse-instant3}
t_{\jmath+1}&\le \bigg(\frac{\gamma^{\jmath+1}(1-\frac{\beta}{\gamma})}{1-\beta}\Gamma_{\mathcal{S}_0}^\mathfrak{Q} -\frac{\beta}{1-\beta}\tau^\mathfrak{Q}  \bigg)^{\frac{1}{\mathfrak{Q} }}\nonumber\\
&\le \bigg(\frac{\gamma^{\jmath+1}(1-\frac{\beta}{\gamma})}{1-\beta}\bigg)^{\frac{1}{\mathfrak{Q} } }\Gamma_{\mathcal{S}_0}-\bigg(\frac{\beta}{1-\beta}\bigg)^{\frac{1}{\mathfrak{Q} } }\tau\nonumber\\
&< \bigg(\frac{\gamma^{\jmath}(\gamma -\beta)}{1-\beta}\bigg)^{\frac{1}{\mathfrak{Q} } }\Gamma_{\mathcal{S}_0}\nonumber\\
&< (\gamma ^\jmath)^{\frac{1}{\mathfrak{Q} } }\Gamma_{\mathcal{S}_0}, \qquad \jmath\le N-1.
\end{align}
Note that the right side of (\ref{proof-stable-impulse-V-fangsuo-j-(j+1)}) is monotonically decreasing \textcolor{black}{with respect to} $t$, and  
equals $0$ at $t=t_{N+1}$.
Hence, together with (\ref{proof-stable-impulse-V-fangsuo-j-(j+1)}) and (\ref{proof-stable-impulse-impulse-instant3}), one
derives that $\lim_{t\rightarrow T_1}\mathcal{V}( t )=0$, $\mathcal{V}( t )\leq \mathcal{V}(t_0)$ for $t\ge 0 $, and $\mathcal{V}( t )\equiv 0$ for $t\geq T_1$, where $T_1=(\gamma^N)^\frac{1}{\mathfrak{Q} }\Gamma_{\mathcal{S}_0}$. Then, 
one concludes that for $t\geq T_1$, 
$\mathcal{S}(t)\equiv 0$ 
and $\lim_{t\rightarrow T_1}\mathcal{S}(t)= 0$. Thus, the system (\ref{delay-impulse-system}) can achieve FTS with the settling time $T_1$.
\end{proof}


In particular, if impulses are delay-independent, i.e., $\tau_\jmath=0,\,\jmath\in \mathbb{Z}^+$, then \textcolor{black}{we obtain the following corollary}.
\begin{corollary}\label{coro-theo-stable-impulse}
Consider the system (\ref{delay-impulse-system}) with  $\tau_\jmath=0,\,\jmath\in \mathbb{Z}^+$. Assume that there exist functions $\pi_1,\,\pi_2\in\mathcal K$, \textcolor{black}{scalars} $c>0,\,\textcolor{black}{\eta\in(0,\,1 )},\,\beta\in(0,\,1),\,\beta_\jmath\in(0,\,\beta]$, 
such that the piecewise right-continuous PDF $\mathcal{V}( t )$ satisfies (\ref{theo-stable-impulse-V-K-function}), (\ref{theo-stable-impulse-V-yuanxitiong}), and
\begin{equation} \label{coro-theo-stable-impulse-V-impulse-instant}
 \mathcal{V}( t_{\jmath} )=\mathcal{V}(\mathcal{G}_\jmath(\mathcal{S}(t_{\jmath}^-))\leq \beta_\jmath^{\frac{1}{1 -\eta}}\mathcal{V}( t_{\jmath}^- ), \qquad t= t_{\jmath},\,\jmath\in\mathbb{Z}^{+}.
\end{equation}
Then, 
the system (\ref{delay-impulse-system}) can achieve FTS. Moreover, the settling time is $\tilde{T}_1=({\tilde{\gamma}}^N)^\frac{1}{\mathfrak{Q} }\Gamma_{\mathcal{S}_0}$, if the impulsive sequence $\{t_{\jmath}\}_{\jmath\in\{1,\,\cdots,\,N\}}$ satisfies
\begin{equation}\label{coro-theo-stable-impulse-impulse-instant}
	t_N\leq \bigg(\frac{\tilde{\gamma}^{N}(1-\frac{\beta}{\tilde{\gamma}})}{1-\beta}\bigg)^{\frac{1}{\mathfrak{Q} }}\Gamma_{\mathcal{S}_0},
\end{equation}
where $\tilde{\gamma}\in(\beta,\,1)$ and $\Gamma_{\mathcal{S}_0}$ is defined in Lemma \ref{lemma-finite-time-no-impulse}.
\end{corollary}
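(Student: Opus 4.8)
The plan is to recognize that this corollary is precisely the delay-free specialization of Theorem \ref{theo-finite-stable-impulse}, so the cleanest route is to invoke that theorem with $\tau=0$ rather than redo the whole argument. First I would observe that setting $\tau_\jmath=0$ collapses the jump hypothesis (\ref{theo-stable-impulse-V-impulse-instant}) into (\ref{coro-theo-stable-impulse-V-impulse-instant}), since $\mathcal{V}(t_\jmath^--\tau_\jmath)=\mathcal{V}(t_\jmath^-)$. Next, with $\tau=0$ the auxiliary sign condition (\ref{theo-stable-impulse-gamma-condition}) becomes $\gamma^{N}(1-\beta/\gamma)\Gamma_{\mathcal{S}_0}^{\mathfrak{Q}}>0$, which holds automatically because $\tilde\gamma\in(\beta,1)$ forces $1-\beta/\tilde\gamma>0$; and the timing bound (\ref{theo-stable-impulse-impulse-instant}) reduces exactly to (\ref{coro-theo-stable-impulse-impulse-instant}). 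Identifying $\gamma$ with $\tilde\gamma$, Theorem \ref{theo-finite-stable-impulse} then yields FTS with settling time $(\tilde\gamma^{N})^{1/\mathfrak{Q}}\Gamma_{\mathcal{S}_0}$, which is exactly $\tilde{T}_1$.

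Because a reader may prefer a self-contained derivation, I would also sketch the direct induction, which is in fact simpler here than in the delayed case. On each interval $[t_{\jmath-1},\,t_\jmath)$ I would repeat the computation from the proof of Lemma \ref{lemma-finite-time-no-impulse}: multiplying (\ref{theo-stable-impulse-V-yuanxitiong}) by $(1-\eta)\mathcal{V}^{-\eta}(t)$ and applying parts (1)--(3) of Lemma \ref{lemma-caputo-micidao-xiandaozaiji-absolute-daoshu} gives the flow estimate $\mathcal{V}^{1-\eta}(t)\leq \mathcal{V}^{1-\eta}(t_{\jmath-1})-\alpha(t-t_{\jmath-1})^{\mathfrak{Q}}$ with $\alpha=c(1-\eta)/\mathfrak{Q}$. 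Raising (\ref{coro-theo-stable-impulse-V-impulse-instant}) to the power $1-\eta$ gives the jump estimate $\mathcal{V}^{1-\eta}(t_\jmath)\leq \beta_\jmath\mathcal{V}^{1-\eta}(t_\jmath^-)\leq \beta\mathcal{V}^{1-\eta}(t_\jmath^-)$. The case $t_1\geq\Gamma_{\mathcal{S}_0}$ (no impulse before $\Gamma_{\mathcal{S}_0}$) is handled directly by Lemma \ref{lemma-finite-time-no-impulse}, so I may assume $t_1<\Gamma_{\mathcal{S}_0}$.

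I would then prove by induction on $\jmath$ that $\mathcal{V}^{1-\eta}(t)\leq \tilde\gamma^{\jmath}\mathcal{V}^{1-\eta}(t_0)-\alpha t^{\mathfrak{Q}}$ for $t\in[t_\jmath,\,t_{\jmath+1})$, adopting the convention $t_{N+1}\triangleq(\tilde\gamma^{N})^{1/\mathfrak{Q}}\Gamma_{\mathcal{S}_0}$. Combining the jump and flow estimates at $t_\jmath$ and using $\alpha\Gamma_{\mathcal{S}_0}^{\mathfrak{Q}}=\mathcal{V}^{1-\eta}(t_0)$ reduces the inductive step to the coefficient inequality $\beta\tilde\gamma^{\jmath-1}+(t_\jmath^{\mathfrak{Q}}/\Gamma_{\mathcal{S}_0}^{\mathfrak{Q}})(1-\beta)\leq\tilde\gamma^{\jmath}$. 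Letting $t\to t_{N+1}$ then gives $\mathcal{V}(t)\to 0$, and positive definiteness together with the right-continuity of $\mathcal{V}$ yields $\mathcal{S}(t)\equiv 0$ for $t\geq \tilde{T}_1$.

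The hard part will be the last coefficient inequality, namely showing that the single hypothesis (\ref{coro-theo-stable-impulse-impulse-instant}) on $t_N$ propagates to every $\jmath\in\Omega_N$. For this I would use that $t_\jmath\leq t_N$ and that $\tilde\gamma\in(0,1)$ makes $\tilde\gamma^{N}\leq\tilde\gamma^{\jmath}$, so that
\[
t_\jmath^{\mathfrak{Q}}\leq t_N^{\mathfrak{Q}}\leq \frac{\tilde\gamma^{N}(1-\beta/\tilde\gamma)}{1-\beta}\Gamma_{\mathcal{S}_0}^{\mathfrak{Q}}\leq \frac{\tilde\gamma^{\jmath}(1-\beta/\tilde\gamma)}{1-\beta}\Gamma_{\mathcal{S}_0}^{\mathfrak{Q}},
\]
which rearranges to the required bound. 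Everything else is the routine CFO computation already carried out in Theorem \ref{theo-finite-stable-impulse}, now with all $\tau$-terms removed.
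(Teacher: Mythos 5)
Your reduction to Theorem \ref{theo-finite-stable-impulse} with $\tau=0$, together with the direct induction you sketch, correctly establishes the \emph{second} assertion of the corollary: the settling time is $\tilde{T}_1=(\tilde{\gamma}^N)^{1/\mathfrak{Q}}\Gamma_{\mathcal{S}_0}$ when the impulsive sequence satisfies (\ref{coro-theo-stable-impulse-impulse-instant}). Your coefficient inequality $\beta\tilde\gamma^{\jmath-1}+(t_\jmath^{\mathfrak{Q}}/\Gamma_{\mathcal{S}_0}^{\mathfrak{Q}})(1-\beta)\leq\tilde\gamma^{\jmath}$ and its propagation from $t_N$ to every $t_\jmath$ is exactly (\ref{proof-stable-impulse-impulse-instant2}) with the $\tau$-terms deleted, and you are right that the sign condition (\ref{theo-stable-impulse-gamma-condition}) becomes automatic when $\tau=0$. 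This is, in fact, the half of the argument the paper omits (it only remarks that it is ``similar to the proof of Theorem \ref{theo-finite-stable-impulse}'').

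There is, however, a genuine gap. The first assertion of the corollary is that the system achieves FTS under \emph{any} delay-independent impulsive sequence satisfying the jump condition (\ref{coro-theo-stable-impulse-V-impulse-instant}); the timing condition (\ref{coro-theo-stable-impulse-impulse-instant}) is invoked only for the refined settling-time estimate. This reading is confirmed by the paper's own proof (which concludes ``the system under any delay-independent impulses can achieve FTS'' before turning to the timed case) and by Remark \ref{remark-theo-stable-impulse}, which highlights precisely this robustness to an uncertain impulsive sequence. Your argument covers only the trivial case $t_1\geq\Gamma_{\mathcal{S}_0}$ and sequences obeying the timing condition, so the unconditional claim is unproved. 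The missing piece is short and is the part the paper carries out in detail: since $\beta_\jmath\in(0,\beta]\subset(0,1)$, the jump estimate can be weakened to $\mathcal{V}^{1-\eta}(t_\jmath)\leq\mathcal{V}^{1-\eta}(t_\jmath^-)$, and an induction over the inter-impulse intervals using the subadditivity $(t-t_\jmath)^{\mathfrak{Q}}\geq t^{\mathfrak{Q}}-t_\jmath^{\mathfrak{Q}}$ for $\mathfrak{Q}\in(0,1]$ yields $\mathcal{V}^{1-\eta}(t)\leq\mathcal{V}^{1-\eta}(t_0)-\alpha t^{\mathfrak{Q}}$ on every $[t_\jmath,t_{\jmath+1})$, forcing $\mathcal{V}$ to vanish by $\Gamma_{\mathcal{S}_0}$ no matter where the impulses fall. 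You should add this case to make the proof complete.
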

\begin{proof}
Without loss of generality, suppose that $\mathcal{S} _0\neq 0$. We then show that the impulsive system (\ref{delay-impulse-system}) is FTS under delay-independent impulses. 
It follows from (\ref{theo-stable-impulse-V-yuanxitiong}) that
\begin{equation}
{\mathcal{V}}^{1 -\eta}( t )\leq { {\mathcal{V}}}^{1 -\eta}(t_{0})-\alpha t^\mathfrak{Q} ,\qquad t\in[0,\,t_1\wedge \Gamma_{\mathcal{S}_0}), \nonumber
\end{equation}
where $\alpha=\frac{c (1-\eta )}{\mathfrak{Q}}$. If $t_1\geq \Gamma_{\mathcal{S}_0}$, it implies that there is no impulse before $\Gamma_{\mathcal{S}_0}$ for the system (\ref{delay-impulse-system}). Then, one concludes that for $t\in[0,\,\Gamma_{\mathcal{S}_0}]$, $\mathcal{V}( t )\leq \mathcal{V}(t_0)$ and for $t\in[\Gamma_{\mathcal{S}_0},\,+\infty)$, $\mathcal{V}( t )\equiv 0$. Otherwise, if $t_1< \Gamma_{\mathcal{S}_0}$, assume that there exists \textcolor{black}{$M\in\mathbb{Z}^+$} such that impulsive instants on $[0,\,\Gamma_{\mathcal{S}_0}]$ satisfy $0<t_1<\cdots<t_M<\Gamma_{\mathcal{S}_0}$. Furthermore, one can derive that
\begin{equation}
{\mathcal{V}}^{1 -\eta}( t )\leq { {\mathcal{V}}}^{1 -\eta}(t_{0})-\alpha t^\mathfrak{Q} ,\qquad t\in[0,\,t_1).\nonumber
\end{equation}
It follows from (\ref{proof-stable-impulse-V-fangsuo}), (\ref{coro-theo-stable-impulse-V-impulse-instant}), and $\beta_1<1$, that
\begin{align}
{\mathcal{V}}^{1 -\eta}( t )&\leq \mathcal{V}^{1 -\eta}( t_{1} )-\alpha (t-t_{1})^\mathfrak{Q} \nonumber\\
&\le \beta _1 \mathcal{V}^{1 -\eta}( t_{1}^- )-\alpha (t-t_{1})^\mathfrak{Q} \nonumber \\
&\le \mathcal{V}^{1 -\eta}(t_0)-\alpha t_{1}^\mathfrak{Q} -\alpha (t^\mathfrak{Q} -t_{1}^\mathfrak{Q} )\nonumber \\
&= \mathcal{V}^{1 -\eta}(t_0)-\alpha t^\mathfrak{Q} , \qquad t\in [t_1,\,t_2).\nonumber
\end{align}
Similarly, for $t\in [t_{\jmath },\,t_{\jmath +1}),\,\textcolor{black}{\jmath \in \Omega _M}$, 
one has
\begin{align}\label{proof-coro-stable-impulse-V-fangsuo-j-(j+1)}
{\mathcal{V}}^{1 -\eta}( t )&\leq \mathcal{V}^{1 -\eta}( t_{\jmath } )-\alpha (t-t_{\jmath })^\mathfrak{Q} \nonumber\\
&\le \beta _\jmath  \mathcal{V}^{1 -\eta}( t_{\jmath }^- )-\alpha (t-t_{\jmath })^\mathfrak{Q} \nonumber \\
&\le \mathcal{V}^{1 -\eta}(t_0)-\alpha t_{\jmath }^\mathfrak{Q} -\alpha (t^\mathfrak{Q} -t_{\jmath }^\mathfrak{Q} )\nonumber \\
&= \mathcal{V}^{1 -\eta}(t_0)-\alpha t^\mathfrak{Q} , 
\end{align}
where $t_{M+1}\triangleq \Gamma_{\mathcal{S}_0}$ such that the right side of (\ref{proof-coro-stable-impulse-V-fangsuo-j-(j+1)}) is exactly zero. Then, one concludes that 
$\lim_{t\rightarrow \Gamma_{\mathcal{S}_0}}\mathcal{V}( t )=0$ and $\mathcal{V} t )\equiv 0$ for $t\geq \Gamma_{\mathcal{S}_0}$. Next, 
one has 
$\mathcal{S}(t)\equiv 0$ for $t\geq \Gamma_{\mathcal{S}_0}$, and $\lim_{t\rightarrow \Gamma_{\mathcal{S}_0}}\mathcal{S}(t)= 0$. Hence, the system (\ref{delay-impulse-system}) under any delay-independent impulses can achieve FTS.

Moreover, when the system (\ref{delay-impulse-system}) is under a certain impulsive sequence $\{t_{\jmath }\}_{\jmath \in\{1,\,\cdots,\,N\}}$ satisfying
(\ref{coro-theo-stable-impulse-impulse-instant}), the \textcolor{black}{rigorous} proof is \textcolor{black}{similar} to the proof of Theorem \ref{theo-finite-stable-impulse}, which is \textcolor{black}{omitted} here due to limited space.
\end{proof}

\begin{remark}\label{remark-theo-stable-impulse}
	\textcolor{black}{Compared with} the \textcolor{black}{scenario} without stabilizing impulses in Lemma \ref{lemma-finite-time-no-impulse}, 
	\textcolor{black}{the settling time} is smaller when the 
	stabilizing delayed impulses are involved in Theorem \ref{theo-finite-stable-impulse}. 
	This means that stabilizing delayed impulses can \textcolor{black}{accelerate} the convergence of the system to equilibrium. Additionally, Corollary \ref{coro-theo-stable-impulse} indicates that the FTS property of the system (\ref{delay-impulse-system}) can always be ensured even if the stabilizing delay-independent impulsive sequence is uncertain and shows that the system (\ref{delay-impulse-system}) has a lower bound of settling time under certain impulsive effects.
\end{remark}


Next, we consider the case that CFODIS (\ref{delay-impulse-system}) suffers from destabilizing delayed impulses.
\begin{theorem}\label{theo-finite-destable-impulse}
Suppose that there exist functions $\pi_1,\,\pi_2\in\mathcal K$, \textcolor{black}{scalars} $c>0,\,\textcolor{black}{\eta\in(0,\,1 )},\,\beta\in[1,\,+\infty ),\,\beta_\jmath \in(0,\,\beta]$, $\gamma\in[\beta,\,+\infty )$,  
such that the piecewise right-continuous PDF $\mathcal{V}( t )$ satisfies (\ref{theo-stable-impulse-V-K-function}), (\ref{theo-stable-impulse-V-yuanxitiong}), and (\ref{theo-stable-impulse-V-impulse-instant}).
Then, the system (\ref{delay-impulse-system}) can achieve FTS if the impulsive sequence $\{t_{\jmath }\}_{\jmath \in\mathbb{Z}^{+}}$ satisfies
\begin{equation}\label{theo-destable-impulse-impulse-instant}
N_0 \triangleq \min \{\jmath \in \mathbb{Z}^{+}:\,t_{\jmath }\ge (\gamma ^{\jmath -1})^{\frac{1}{\mathfrak{Q} }}\Gamma _{\mathcal{S}_0} \}<+\infty ,
\end{equation}
and
\begin{equation}\label{theo-destable-impulse-gamma-condition}
\gamma \ge \beta +\frac{\beta \tau ^\mathfrak{Q} }{\Gamma _{\mathcal{S}_0}^\mathfrak{Q} } .
\end{equation}
Moreover, the settling time is $T_2=(\gamma^{N_0-1})^\frac{1}{\mathfrak{Q} }\Gamma_{\mathcal{S}_0}$, 
where $\Gamma_{\mathcal{S}_0}$ is defined in Lemma \ref{lemma-finite-time-no-impulse}.
\end{theorem}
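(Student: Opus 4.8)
The plan is to follow the same envelope-propagation scheme used in the proof of Theorem~\ref{theo-finite-stable-impulse}, adapting the sign bookkeeping to the destabilizing regime $\beta\geq 1$, $\gamma\geq\beta$. First I would linearize the Lyapunov inequality (\ref{theo-stable-impulse-V-yuanxitiong}): multiplying by $(1-\eta)\mathcal{V}^{-\eta}(t)$ and invoking parts (1) and (2) of Lemma~\ref{lemma-caputo-micidao-xiandaozaiji-absolute-daoshu} gives ${_{t_{\jmath-1}}\mathfrak{T}^{\mathfrak{Q}}_{t}}\,\mathcal{V}^{1-\eta}(t)\leq -c(1-\eta)$ on each $[t_{\jmath-1},\,t_\jmath)$. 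Integrating via part (3) yields the per-interval contraction $\mathcal{V}^{1-\eta}(t)-\mathcal{V}^{1-\eta}(t_{\jmath-1})\leq-\alpha(t-t_{\jmath-1})^{\mathfrak{Q}}$ with $\alpha=c(1-\eta)/\mathfrak{Q}$, exactly as in (\ref{proof-stable-impulse-V-fangsuo}). This is the workhorse estimate, and the base case on $[0,\,t_1)$ reads $\mathcal{V}^{1-\eta}(t)\leq\mathcal{V}^{1-\eta}(t_0)-\alpha t^{\mathfrak{Q}}$.

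The core is an induction establishing the envelope
\[
\mathcal{V}^{1-\eta}(t)\leq\gamma^{\jmath}\mathcal{V}^{1-\eta}(t_0)-\alpha t^{\mathfrak{Q}},\qquad t\in[t_\jmath,\,t_{\jmath+1}),\quad \jmath=0,1,\dots,N_0-1.
\]
For the inductive step I would use (\ref{theo-stable-impulse-V-impulse-instant}) together with $\beta_\jmath\leq\beta$ and $\mathcal{V}\geq 0$ to obtain $\mathcal{V}^{1-\eta}(t_\jmath)\leq\beta\,\mathcal{V}^{1-\eta}(t_\jmath^--\tau_\jmath)$, then insert the envelope evaluated at the delayed argument $t_\jmath-\tau_\jmath\in[t_{\jmath-1},\,t_\jmath)$. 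Re-applying the contraction on $[t_\jmath,\,t_{\jmath+1})$ and using subadditivity $(a-b)^{\mathfrak{Q}}\geq a^{\mathfrak{Q}}-b^{\mathfrak{Q}}$ (valid for $\mathfrak{Q}\in(0,1]$) together with $\tau_\jmath\leq\tau$, the accumulated bound becomes $\beta\gamma^{\jmath-1}\mathcal{V}^{1-\eta}(t_0)+\alpha\beta\tau^{\mathfrak{Q}}-\alpha(\beta-1)t_\jmath^{\mathfrak{Q}}-\alpha t^{\mathfrak{Q}}$. Here the destabilizing conventions pay off: since $\beta\geq 1$ the term $-\alpha(\beta-1)t_\jmath^{\mathfrak{Q}}$ is nonpositive and may be discarded, and condition (\ref{theo-destable-impulse-gamma-condition}), i.e.\ $\gamma\geq\beta+\beta\tau^{\mathfrak{Q}}/\Gamma_{\mathcal{S}_0}^{\mathfrak{Q}}$, is precisely what forces $\beta\gamma^{\jmath-1}\mathcal{V}^{1-\eta}(t_0)+\alpha\beta\tau^{\mathfrak{Q}}\leq\gamma^{\jmath}\mathcal{V}^{1-\eta}(t_0)$, after recalling $\mathcal{V}^{1-\eta}(t_0)=\alpha\Gamma_{\mathcal{S}_0}^{\mathfrak{Q}}$ and dividing by $\gamma^{\jmath-1}\geq 1$. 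This closes the induction.

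Next I would read off the settling time. On the last interval $[t_{N_0-1},\,t_{N_0})$ the envelope is $\gamma^{N_0-1}\mathcal{V}^{1-\eta}(t_0)-\alpha t^{\mathfrak{Q}}$, which vanishes exactly at $T_2=(\gamma^{N_0-1})^{1/\mathfrak{Q}}\Gamma_{\mathcal{S}_0}$. Minimality of $N_0$ in (\ref{theo-destable-impulse-impulse-instant}) gives $t_{N_0-1}<(\gamma^{N_0-2})^{1/\mathfrak{Q}}\Gamma_{\mathcal{S}_0}\leq T_2\leq t_{N_0}$ (using $\gamma\geq 1$), so $T_2$ lies in this interval and no impulse interrupts the decay before it. Nonnegativity of $\mathcal{V}^{1-\eta}$ then forces $\lim_{t\to T_2}\mathcal{V}(t)=0$ and $\mathcal{V}\equiv 0$ on $[T_2,\,t_{N_0})$, whence $\mathcal{S}(T_2)=0$ via the class-$\mathcal{K}$ lower bound in (\ref{theo-stable-impulse-V-K-function}).

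I expect the main obstacle to be the final point: certifying that the state remains identically zero for all $t\geq T_2$, not merely that it reaches zero at $T_2$. Because the impulse at $t_{N_0}$ samples the delayed state $\mathcal{S}(t_{N_0}^--\tau_{N_0})$ with $t_{N_0}-\tau_{N_0}\in[t_{N_0-1},\,t_{N_0})$, and this sampling instant may fall strictly before $T_2$, where the envelope is still positive, one must argue carefully that a destabilizing delayed impulse cannot reawaken the state once settling has occurred. This is where the definition of $N_0$ as the minimal ``late'' index, the delay constraint $t_{\jmath-1}\leq t_\jmath-\tau_\jmath$, and the equilibrium property $\mathcal{F}(0)=\mathcal{G}_\jmath(0)=0$ must be combined; handling this cleanly is the delicate part and the step on which I would spend the most care.
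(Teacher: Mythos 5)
Your proposal follows essentially the same route as the paper's proof: the same linearization of (\ref{theo-stable-impulse-V-yuanxitiong}) via Lemma \ref{lemma-caputo-micidao-xiandaozaiji-absolute-daoshu}, the same interval-by-interval envelope $\mathcal{V}^{1-\eta}(t)\le\gamma^{\jmath}\mathcal{V}^{1-\eta}(t_0)-\alpha t^{\mathfrak{Q}}$ propagated through the delayed impulses with the subadditivity $(a-b)^{\mathfrak{Q}}\ge a^{\mathfrak{Q}}-b^{\mathfrak{Q}}$, the same use of $\beta\ge 1$ to discard the $\alpha(1-\beta)t_\jmath^{\mathfrak{Q}}$ term, and the same absorption of $\alpha\beta\tau^{\mathfrak{Q}}$ via (\ref{theo-destable-impulse-gamma-condition}) after noting $\mathcal{V}^{1-\eta}(t_0)=\alpha\Gamma_{\mathcal{S}_0}^{\mathfrak{Q}}$. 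The ``delicate point'' you flag at the end --- that the impulse at $t_{N_0}$ samples $\mathcal{S}(t_{N_0}^--\tau_{N_0})$ at an instant that may lie strictly before $T_2$, where the envelope is still positive, so the jump could in principle reactivate the state --- is a legitimate concern, and the paper's proof does not address it either: it establishes the envelope only on $[t_{N_0-1},\,t_{N_0}\wedge T_2)$ and then simply asserts $\mathcal{V}(t)\equiv 0$ for all $t\ge T_2$. So you have correctly reproduced the paper's argument and, beyond that, identified the one step it leaves unjustified.
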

\begin{proof}
Without loss of generality, suppose that $\mathcal{S}_0\neq 0$. 
From (\ref{proof-stable-impulse-V-fangsuo}), one has
\begin{equation}
{\mathcal{V}}^{1 -\eta}( t )\leq { {\mathcal{V}}}^{1 -\eta}(t_{0})-\alpha t^\mathfrak{Q} ,\qquad t\in[0,\,(t_1-\tau_1)\wedge \Gamma_{\mathcal{S}_0}), \nonumber
\end{equation}
where $\alpha=\frac{c (1-\eta )}{\mathfrak{Q}}$. If $t_1-\tau_1\geq \Gamma_{\mathcal{S}_0}$, it implies that $N_0=1$ and the system (\ref{delay-impulse-system}) has no delay-dependent impulses. In such case, one derives that for $t\in[0,\,\Gamma_{\mathcal{S}_0}]$, $\mathcal{V}( t )\leq \mathcal{V}(t_0)$ holds and for $t\in[\Gamma_{\mathcal{S}_0},\,+\infty)$, $\mathcal{V}( t )\equiv 0$ holds. Conversely, if $t_1-\tau_1< \Gamma_{\mathcal{S}_0}$, 
it follows from (\ref{theo-destable-impulse-gamma-condition}) and $\gamma \ge 1$ that
\begin{align}\label{proof-destable-impulse-gamma-condition}
\beta\gamma ^{\jmath -2}\mathcal{V}^{1 -\eta }(t_0)+\alpha \beta \tau ^\mathfrak{Q} \le \gamma ^{\jmath -1}\mathcal{V}^{1 -\eta }(t_0),\qquad \jmath \ge 2.
\end{align}
Next, for $t\in [t_1,\,t_2\wedge \gamma ^\frac{1}{\mathfrak{Q} }\Gamma _{\mathcal{S}_0})$, 
one has
\begin{align}\label{proof-destable-impulse-V-fangsuo-1-2}
{\mathcal{V}}^{1 -\eta}( t )&\leq \mathcal{V}^{1 -\eta}( t_{1} )-\alpha (t-t_{1})^\mathfrak{Q} \nonumber\\
&\le \beta _1 \mathcal{V}^{1 -\eta}( t_{1}^--\tau _1 )-\alpha (t-t_{1})^\mathfrak{Q} \nonumber \\
&\le \beta [\mathcal{V}^{1 -\eta}(t_0)-\alpha (t_{1}^--\tau _1)^\mathfrak{Q} ]-\alpha (t-t_{1})^\mathfrak{Q} \nonumber \\
&\le \beta \mathcal{V}^{1 -\eta}(t_0)-\alpha\beta  (t_{1}^\mathfrak{Q} -\tau _1^\mathfrak{Q} )-\alpha (t^\mathfrak{Q} -t_{1}^\mathfrak{Q} )\nonumber \\
&\le \beta \mathcal{V}^{1 -\eta}(t_0)+\alpha \beta \tau^\mathfrak{Q} -\alpha t^\mathfrak{Q} \nonumber \\
&\le \gamma \mathcal{V}^{1 -\eta}(t_0)-\alpha t^\mathfrak{Q} .
\end{align}
Similarly, for $t\in [t_{N_0-1},\,t_{N_0}\wedge (\gamma^{N_0-1}) ^\frac{1}{\mathfrak{Q} }\Gamma _{\mathcal{S}_0})$, one derives that
\begin{align}\label{proof-destable-impulse-V-fangsuo-(N0-1)-N0}
{\mathcal{V}}^{1 -\eta}( t )&\leq \mathcal{V}^{1 -\eta}( t_{N_0-1} )-\alpha (t-t_{N_0-1})^\mathfrak{Q} \nonumber\\
&\le \beta _{N_0-1} \mathcal{V}^{1 -\eta}( t_{N_0-1}^--\tau _{N_0-1} )-\alpha (t-t_{N_0-1})^\mathfrak{Q} \nonumber \\
&\le \beta [\gamma^{N_0-2} \mathcal{V}^{1 -\eta}(t_0)-\alpha (t_{N_0-1}^--\tau _{N_0-1})^\mathfrak{Q} ]-\alpha (t-t_{N_0-1})^\mathfrak{Q} \nonumber \\
&\le \beta\gamma^{N_0-2}  \mathcal{V}^{1 -\eta}(t_0)-\alpha\beta  (t_{N_0-1}^\mathfrak{Q} -\tau _{N_0-1}^\mathfrak{Q} )-\alpha (t^\mathfrak{Q} -t_{N_0-1}^\mathfrak{Q} )\nonumber \\
&\le \beta\gamma^{N_0-2}  \mathcal{V}^{1 -\eta}(t_0)+\alpha \beta \tau^\mathfrak{Q} -\alpha t^\mathfrak{Q} \nonumber \\
&\le \gamma^{N_0-1} \mathcal{V}^{1 -\eta}(t_0)-\alpha t^\mathfrak{Q} .
\end{align}
According to (\ref{theo-destable-impulse-impulse-instant}), one has $t_{\jmath }< (\gamma ^{\jmath -1})^{\frac{1}{\mathfrak{Q} }}\Gamma _{\mathcal{S}_0} $ when $\textcolor{black}{\jmath \in \Omega _{N_0-1}}$, and $t_{N_0}\ge (\gamma ^{N_0-1})^{\frac{1}{\mathfrak{Q} }}\Gamma _{\mathcal{S}_0}$. 
Then, one concludes that 
$\lim_{t\rightarrow T_2}\mathcal{V}( t )=0$ and $\mathcal{V}( t )\equiv 0$ for $t\geq T_2$, where $T_2=(\gamma^{N_0-1})^\frac{1}{\mathfrak{Q} }\Gamma_{\mathcal{S}_0}$. Then, 
one has $\mathcal{S}(t)\equiv 0$ for $t\geq T_2$, and $\lim_{t\rightarrow T_2}\mathcal{S}(t)= 0$. Thus, the system (\ref{delay-impulse-system}) can achieve FTS with the settling time $T_2$.
\end{proof}

If impulses are delay-independent, i.e., $\tau_\jmath =0,\,\jmath \in \mathbb{Z}^+$, then \textcolor{black}{we obtain the following corollary}. 

\begin{corollary}\label{coro-theo-destable-impulse}
Consider the system (\ref{delay-impulse-system}) with  $\tau_\jmath =0,\,\jmath \in \mathbb{Z}^+$. Assume that there exist functions $\pi_1,\,\pi_2\in\mathcal K$, \textcolor{black}{scalars} $c>0,\,\textcolor{black}{\eta\in(0,\,1 )},\,\beta\in[1,\,+\infty ),\,\beta_\jmath \in(0,\,\beta]$, 
such that the piecewise right-continuous PDF $\mathcal{V}( t )$ satisfies (\ref{theo-stable-impulse-V-K-function}), (\ref{theo-stable-impulse-V-yuanxitiong}) and (\ref{coro-theo-stable-impulse-V-impulse-instant}). Then, the system (\ref{delay-impulse-system}) can achieve FTS if the impulsive sequence $\{t_{\jmath }\}_{\jmath \in\mathbb{Z}^{+}}$ satisfies
\begin{equation}\label{coro-destable-impulse-impulse-instant}
\tilde{N}_0 \triangleq \min \{\jmath \in \mathbb{Z}^{+}:\,t_{\jmath }\ge (\beta ^{\jmath -1})^{\frac{1}{\mathfrak{Q} }}\Gamma _{\mathcal{S}_0} \}<+\infty .
\end{equation}
Moreover, the settling time is $\tilde{T}_2=(\beta^{\tilde{N}_0-1})^\frac{1}{\mathfrak{Q} }\Gamma_{\mathcal{S}_0}$, where 
$\Gamma_{\mathcal{S}_0}$ is defined in Lemma \ref{lemma-finite-time-no-impulse}.
\end{corollary}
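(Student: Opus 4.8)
The plan is to obtain Corollary \ref{coro-theo-destable-impulse} as the delay-free specialization of Theorem \ref{theo-finite-destable-impulse}, exploiting the fact that setting $\tau_\jmath=0$ collapses the delayed impulse condition (\ref{theo-stable-impulse-V-impulse-instant}) into the instantaneous one (\ref{coro-theo-stable-impulse-V-impulse-instant}). First I would take $\gamma=\beta$ and $\tau=0$. With these choices the admissibility requirement $\gamma\in[\beta,+\infty)$ is met with equality, and the gain condition (\ref{theo-destable-impulse-gamma-condition}), namely $\gamma\ge\beta+\beta\tau^\mathfrak{Q}/\Gamma_{\mathcal{S}_0}^\mathfrak{Q}$, reduces to $\gamma\ge\beta$, which also holds with equality. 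Under this substitution the impulse-counting threshold (\ref{theo-destable-impulse-impulse-instant}) becomes precisely (\ref{coro-destable-impulse-impulse-instant}), so that $N_0=\tilde N_0$, and the settling time $T_2=(\gamma^{N_0-1})^{1/\mathfrak{Q}}\Gamma_{\mathcal{S}_0}$ becomes $\tilde T_2=(\beta^{\tilde N_0-1})^{1/\mathfrak{Q}}\Gamma_{\mathcal{S}_0}$. Invoking Theorem \ref{theo-finite-destable-impulse} then yields the claim directly, which matches the space-saving style already used for Corollary \ref{coro-theo-stable-impulse}.

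Should a self-contained argument be preferred, I would mirror the proof of Theorem \ref{theo-finite-destable-impulse} with $\tau_\jmath=0$ throughout. Starting from (\ref{theo-stable-impulse-V-yuanxitiong}), multiplying by $(1-\eta)\mathcal{V}^{-\eta}(t)$ and applying Lemma \ref{lemma-caputo-micidao-xiandaozaiji-absolute-daoshu}.(1)--(3) gives the inter-impulse estimate $\mathcal{V}^{1-\eta}(t)\le\mathcal{V}^{1-\eta}(t_{\jmath-1})-\alpha(t-t_{\jmath-1})^\mathfrak{Q}$ with $\alpha=c(1-\eta)/\mathfrak{Q}$, exactly as in (\ref{proof-stable-impulse-V-fangsuo}). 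Over $[0,t_1)$ this reads $\mathcal{V}^{1-\eta}(t)\le\mathcal{V}^{1-\eta}(t_0)-\alpha t^\mathfrak{Q}$; if $t_1\ge\Gamma_{\mathcal{S}_0}$ then $\tilde N_0=1$ and FTS follows from Lemma \ref{lemma-finite-time-no-impulse} with $\tilde T_2=\Gamma_{\mathcal{S}_0}$.

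For the genuine-impulse case I would prove by induction on $\jmath\le\tilde N_0-1$ that $\mathcal{V}^{1-\eta}(t)\le\beta^{\jmath}\mathcal{V}^{1-\eta}(t_0)-\alpha t^\mathfrak{Q}$ on $[t_\jmath,\,t_{\jmath+1}\wedge(\beta^{\jmath})^{1/\mathfrak{Q}}\Gamma_{\mathcal{S}_0})$. The inductive step combines the jump bound $\mathcal{V}^{1-\eta}(t_\jmath)\le\beta_\jmath\mathcal{V}^{1-\eta}(t_\jmath^-)\le\beta\,\mathcal{V}^{1-\eta}(t_\jmath^-)$, obtained by raising (\ref{coro-theo-stable-impulse-V-impulse-instant}) to the power $1-\eta$ and using $\beta_\jmath\le\beta$ with $\mathcal{V}\ge0$, together with the inter-impulse decay and the subadditivity inequality $(t-t_\jmath)^\mathfrak{Q}\ge t^\mathfrak{Q}-t_\jmath^\mathfrak{Q}$, valid for $\mathfrak{Q}\in(0,1]$. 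The resulting surplus term carries the factor $\beta-1\ge0$, which is absorbed into the geometric bound $\beta^{\jmath}\mathcal{V}^{1-\eta}(t_0)$ rather than spoiling it, reproducing the chain (\ref{proof-destable-impulse-V-fangsuo-1-2})--(\ref{proof-destable-impulse-V-fangsuo-(N0-1)-N0}) with $\gamma$ replaced by $\beta$. Evaluating the bound at $\tilde T_2=(\beta^{\tilde N_0-1})^{1/\mathfrak{Q}}\Gamma_{\mathcal{S}_0}$ and using $\alpha\Gamma_{\mathcal{S}_0}^\mathfrak{Q}=\mathcal{V}^{1-\eta}(t_0)$ shows the right-hand side vanishes there, while the definition of $\tilde N_0$ guarantees $t_{\tilde N_0}\ge\tilde T_2$, so no further jump intervenes before the terminal time; hence $\lim_{t\to\tilde T_2}\mathcal{V}(t)=0$ and $\mathcal{V}\equiv0$ thereafter, and (\ref{theo-stable-impulse-V-K-function}) with $\pi_1\in\mathcal K$ transfers this conclusion to $\mathcal{S}$.

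The main obstacle I anticipate is the bookkeeping in the inductive step: I must verify that setting $\gamma=\beta$ keeps every inequality in the chain tight enough, in particular that the destabilizing factor $\beta\ge1$ does not let the successive bounds $\beta^{\jmath}\mathcal{V}^{1-\eta}(t_0)$ outrun the decay $\alpha t^\mathfrak{Q}$ before the designated time. This is precisely where the threshold (\ref{coro-destable-impulse-impulse-instant}) enters, since it forces each pre-$\tilde N_0$ impulse to occur strictly before $(\beta^{\jmath-1})^{1/\mathfrak{Q}}\Gamma_{\mathcal{S}_0}$, keeping each induction interval nonempty and pinning the terminal value to zero.
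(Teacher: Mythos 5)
Your proposal is correct and follows the route the paper intends: the paper states this corollary without proof, treating it as the immediate specialization of Theorem \ref{theo-finite-destable-impulse} obtained by taking $\tau=0$ and $\gamma=\beta$, which is exactly your primary argument. Your self-contained backup (induction with the jump bound, the subadditivity $(t-t_\jmath)^{\mathfrak{Q}}\ge t^{\mathfrak{Q}}-t_\jmath^{\mathfrak{Q}}$, and absorption of the $(\beta-1)$ surplus) faithfully reproduces the chain of estimates in the theorem's proof with $\gamma$ replaced by $\beta$, so no gap remains.
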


\begin{remark}\label{remark-coro-destable-impulse}
From Theorem \ref{theo-finite-destable-impulse} and Corollary \ref{coro-theo-destable-impulse}, \textcolor{black}{we conclude} that the presence of destabilizing impulses increases the settling time, 
compared with the case without destabilizing impulses.
\end{remark}

\begin{remark}\label{remark-coro-stable-destable-impulse-integer-order}
When $\mathfrak{Q} =1$, the system (\ref{delay-impulse-system}) is reduced to the \textcolor{black}{IO} case. In such situation, if $\beta_\jmath =\beta\in(0,\,1),\,\jmath \in\mathbb{Z}^+$, Corollary \ref{coro-theo-stable-impulse} is simplified to the results 
given in \cite[Theorem 1]{4}. Meanwhile, if $\beta_\jmath =\beta\in[1,\,+\infty ),\,\jmath \in\mathbb{Z}^+$, Corollary \ref{coro-theo-destable-impulse} is equivalent to the results 
proposed in \cite[Theorem 2]{4}. Hence, 
our results extend the FTS of \textcolor{black}{IO} impulsive systems established in \cite{4} to the fractional-order case with delayed impulses.
\end{remark}

\section{Applications}\label{sec: Applications}
In this section, we \textcolor{black}{apply} Theorems \ref{theo-finite-stable-impulse} and \ref{theo-finite-destable-impulse} to analyze the finite-time synchronization problem of \textcolor{black}{CFO} memristive \textcolor{black}{NNs} (CFOMNNs) with delayed impulses. The model we consider is given by 
\begin{equation} \label{delay-impulse-memristive-drive-system}
	\left\{
	\begin{array}{lcl}
		_{t_{\jmath -1}}\mathfrak{T} ^{\mathfrak{Q} }_{t} x_r(t)=-a_{r} x_r(t)+\sum_{s=1}^{n} b_{rs} (x_r(t))f_s(x_s(t))+I_r,  & t\geq t_0,\,t\in [t_{\jmath -1},\,t_{\jmath }),\\
		x_r(t_{\jmath })=\mu _\jmath  x_r(t_{\jmath }^--\tau_\jmath ), & t= t_{\jmath },\,\jmath \in\mathbb{Z}^{+},\\
		x_r(t_0)=x_{r0}, & r\in  \textcolor{black}{\Omega _n},  
	\end{array}
	\right.
\end{equation}
and
\begin{equation} \label{delay-impulse-memristive-response-system}
	\left\{
	\begin{array}{lcl}
		_{t_{\jmath -1}}\mathfrak{T} ^{\mathfrak{Q} }_{t} y_r(t)=-a_{r} y_r(t)+\sum_{s=1}^{n} b_{rs} (y_r(t))f_s(y_s(t))+I_r+u_r(t),  & t\geq t_0,\,t\in [t_{\jmath -1},\,t_{\jmath }),\\
		y_r(t_{\jmath })=\mu _\jmath  y_r(t_{\jmath }^--\tau_\jmath ), & t= t_{\jmath },\,\jmath \in\mathbb{Z}^{+},\\
		y_r(t_0)=y_{r0}, & r\in \textcolor{black}{\Omega _n},   
	\end{array}
	\right.
\end{equation}
where \textcolor{black}{$\mathfrak{Q} \in (0,\,1]$}, $t_0=0$, 
\textcolor{black}{$n$ denotes the amount of units in the NNs}, $x_r(t),\,y_r(t)\in \mathbb{R}$ are respectively the $r$-th neuron states of \textcolor{black}{drive-response systems (\ref{delay-impulse-memristive-drive-system}) and (\ref{delay-impulse-memristive-response-system})}, $a_r\in \mathbb{R}^+$ is the self-feedback connection weight, $f_r(\cdot): {\mathbb{R}}\to\mathbb{R}$ denotes the activation function, $I_r\in \mathbb{R}$ represents the external input, $u_r(t)$ is the controller which will be designed later,  $\mu _\jmath  \in \mathbb{R}$ represents the impulsive intensity at the impulsive instant $t_\jmath $, $\tau_{\jmath }\in\mathbb{R}^+$ is the impulsive delay satisfying $\tau_\jmath  \leq \tau$ for a constant $\tau\in\mathbb{R}^+$, the impulsive sequence is $\{t_{\jmath }\}_{\jmath \in\mathbb{Z}^{+}}$ satisfying $t_{\jmath -1}\leq t_\jmath -\tau_\jmath  < t_\jmath $, $x_{r0},\,y_{r0}$ are initial states of the $r$-th neuron, and $b_{rs} (x_r(t)),\,b_{rs} (y_r(t))$ are respectively memristive synaptic connection weights of \textcolor{black}{drive-response systems (\ref{delay-impulse-memristive-drive-system}) and (\ref{delay-impulse-memristive-response-system})} which are described as
\begin{flalign*}
	b_{rs} (x_r(t))=\left\{
	\begin{aligned}
		\overline{b}_{rs},    \qquad   & \left | x_r(t) \right | \le  \Theta _r\\
		\underline{b}_{rs}, \qquad  & \left | x_r(t) \right | >  \Theta _r\\
	\end{aligned}
	\right.
	,~\and~	b_{rs} (y_r(t))=\left\{
	\begin{aligned}
		\overline{b}_{rs},    \qquad   & \left | y_r(t) \right | \le  \Theta _r\\
		\underline{b}_{rs}, \qquad  & \left | y_r(t) \right | >  \Theta _r\\
	\end{aligned}
	\right.	
	.
\end{flalign*}
Here, $\Theta _r\in\mathbb{R}^+$ characterizes the switching jumps, and constants $\overline{b}_{rs},\,\underline{b}_{rs}$ satisfy $\overline{b}_{rs}\ne \underline{b}_{rs}$ ($r,\,s\in \Omega _n$).
Moreover, states $x_r(t),\,y_r(t)$ are assumed to be right continuous and have left limits. For convenience, several notations are given as follows: $\hat{b}_{rs}=\max \{\overline{b}_{rs},\,\underline{b}_{rs}\}$, $\check{b}_{rs}=\min \{\overline{b}_{rs},\,\underline{b}_{rs}\}$, $b^{**}_{rs}=\frac{1}{2}(\hat{b}_{rs}+\check{b}_{rs}) $, and $b^{*}_{rs}=\frac{1}{2}(\hat{b}_{rs}-\check{b}_{rs}) $. Then, systems (\ref{delay-impulse-memristive-drive-system}) and (\ref{delay-impulse-memristive-response-system}) are rewritten as
\begin{equation} \label{delay-impulse-memristive-drive-system-rewritten}
	\left\{
	\begin{array}{lcl}
		_{t_{\jmath -1}}\mathfrak{T} ^{\mathfrak{Q} }_{t} x_r(t)=-a_{r} x_r(t)+\sum_{s=1}^{n} (b^{**}_{rs}+\Psi    _{rs}(x_r(t)))f_s(x_s(t))+I_r,  & t\geq t_0,\,t\in [t_{\jmath -1},\,t_{\jmath }),\\
		x_r(t_{\jmath })=\mu _\jmath  x_r(t_{\jmath }^--\tau_\jmath ), & t= t_{\jmath },\,\jmath \in\mathbb{Z}^{+},\\
		x_r(t_0)=x_{r0}, & r\in \textcolor{black}{\Omega _n},\nonumber
	\end{array}
	\right.
\end{equation}
and
\begin{equation} \label{delay-impulse-memristive-response-system-rewritten}
	\left\{
	\begin{array}{lcl}
		_{t_{\jmath -1}}\mathfrak{T} ^{\mathfrak{Q} }_{t} y_r(t)=-a_{r} y_r(t)+\sum_{s=1}^{n} (b^{**}_{rs}+\Psi_{rs}(y_r(t)))f_s(y_s(t))+I_r
		+u_r(t),  & t\geq t_0,\,t\in [t_{\jmath -1},\,t_{\jmath }),\\
		y_r(t_{\jmath })=\mu _\jmath  y_r(t_{\jmath }^--\tau_\jmath ), & t= t_{\jmath },\,\jmath \in\mathbb{Z}^{+},\\
		y_r(t_0)=y_{r0}, & r\in \textcolor{black}{\Omega _n},\nonumber
	\end{array}
	\right.
\end{equation}
where
\begin{flalign*}
	\Psi    _{rs}(x_r(t))=\left\{
	\begin{aligned}
		b^{*}_{rs},    \qquad   & b_{rs} (x_r(t)) =  \hat{b}_{rs}\\
		-b^{*}_{rs}, \qquad  & b_{rs} (x_r(t)) =  \check{b}_{rs}\\
	\end{aligned}
	\right.
	,~\and~		\Psi    _{rs}(y_r(t))=\left\{
	\begin{aligned}
		b^{*}_{rs},    \qquad   & b_{rs} (y_r(t)) =  \hat{b}_{rs}\\
		-b^{*}_{rs}, \qquad  & b_{rs} (y_r(t)) =  \check{b}_{rs}\\
	\end{aligned}
	\right.	
	.
\end{flalign*}

Next, the Filippov theory \cite{20,21} is introduced to analyze the synchronization of delayed impulsive CFOMNNs (\ref{delay-impulse-memristive-drive-system}) and (\ref{delay-impulse-memristive-response-system}). \textcolor{black}{According to} the definition of set-valued map and the theory of differential inclusions given in \cite{20} and \cite{21}, systems (\ref{delay-impulse-memristive-drive-system}) and (\ref{delay-impulse-memristive-response-system}) can be transformed into
\begin{equation} \label{delay-impulse-memristive-drive-system-inclusion}
	\left\{
	\begin{array}{lcl}
		_{t_{\jmath -1}}\mathfrak{T} ^{\mathfrak{Q} }_{t} x_r(t)\in -a_{r} x_r(t)+\sum_{s=1}^{n} (b^{**}_{rs}+{co}[\Psi    _{rs}(x_r(t))])f_s(x_s(t))+I_r,  & t\geq t_0,\,t\in [t_{\jmath -1},\,t_{\jmath }),\\
		x_r(t_{\jmath })=\mu _\jmath  x_r(t_{\jmath }^--\tau_\jmath ), & t= t_{\jmath },\,\jmath \in\mathbb{Z}^{+},\\
		x_r(t_0)=x_{r0}, & r\in \textcolor{black}{\Omega _n},\nonumber
	\end{array}
	\right.
\end{equation}
and
\begin{equation} \label{delay-impulse-memristive-response-system-inclusion}
	\left\{
	\begin{array}{lcl}
		_{t_{\jmath -1}}\mathfrak{T} ^{\mathfrak{Q} }_{t} y_r(t)\in -a_{r} y_r(t)+\sum_{s=1}^{n} (b^{**}_{rs}+{co}[\Psi    _{rs}(y_r(t))])f_s(y_s(t))+I_r\\\qquad\qquad ~~~~
		+u_r(t),  & t\geq t_0,\,t\in [t_{\jmath -1},\,t_{\jmath }),\\
		y_r(t_{\jmath })=\mu _\jmath  y_r(t_{\jmath }^--\tau_\jmath ), & t= t_{\jmath },\,\jmath \in\mathbb{Z}^{+},\\
		y_r(t_0)=y_{r0}, & r\in \textcolor{black}{\Omega _n}.\nonumber
	\end{array}
	\right.
\end{equation}
From the measurable selection theorem in \cite{20} and \cite{21}, there exist functions $\gamma _{rs},\,\delta _{rs}\in {co}[-1,\,1]$ such that systems (\ref{delay-impulse-memristive-drive-system}) and (\ref{delay-impulse-memristive-response-system}) can be rewritten as
\begin{equation} \label{delay-impulse-memristive-drive-system-measurable}
	\left\{
	\begin{array}{lcl}
		_{t_{\jmath -1}}\mathfrak{T} ^{\mathfrak{Q} }_{t} x_r(t)= -a_{r} x_r(t)+\sum_{s=1}^{n} (b^{**}_{rs}+b^{*}_{rs}\gamma _{rs})f_s(x_s(t))+I_r,  & t\geq t_0,\,t\in [t_{\jmath -1},\,t_{\jmath }),\\
		x_r(t_{\jmath })=\mu _\jmath  x_r(t_{\jmath }^--\tau_\jmath ), & t= t_{\jmath },\,\jmath \in\mathbb{Z}^{+},\\
		x_r(t_0)=x_{r0}, & r\in \textcolor{black}{\Omega _n},
	\end{array}
	\right.
\end{equation}
and
\begin{equation} \label{delay-impulse-memristive-response-system-measurable}
	\left\{
	\begin{array}{lcl}
		_{t_{\jmath -1}}\mathfrak{T} ^{\mathfrak{Q} }_{t} y_r(t)= -a_{r} y_r(t)+\sum_{s=1}^{n} (b^{**}_{rs}+b^{*}_{rs}\delta _{rs})f_s(y_s(t)) +I_r+u_r(t),  & t\geq t_0,\,t\in [t_{\jmath -1},\,t_{\jmath }),\\
		y_r(t_{\jmath })=\mu _\jmath  y_r(t_{\jmath }^--\tau_\jmath ), & t= t_{\jmath },\,\jmath \in\mathbb{Z}^{+},\\
		y_r(t_0)=y_{r0}, & r\in \textcolor{black}{\Omega _n}.
	\end{array}
	\right.
\end{equation}

Let $e_r(t)=y_r(t)-x_r(t),\,r\in \Omega _n$ be the synchronization error. Then, it follows from (\ref{delay-impulse-memristive-drive-system-measurable}) and (\ref{delay-impulse-memristive-response-system-measurable}) that 
\begin{equation} \label{delay-impulse-memristive-error-system}
	\left\{
	\begin{array}{lcl}
		_{t_{\jmath -1}}\mathfrak{T} ^{\mathfrak{Q} }_{t} e_r(t)= -a_{r} e_r(t)+\sum_{s=1}^{n} (b^{**}_{rs}+b^{*}_{rs}\gamma _{rs})(f_s(y_s(t))-f_s(x_s(t)))
		\\\qquad\qquad ~~~~  +\sum_{s=1}^{n} b^{*}_{rs}(\delta _{rs}-\gamma _{rs})f_s(y_s(t))+u_r(t),  & t\geq t_0,\,t\in [t_{\jmath -1},\,t_{\jmath }),\\
		e_r(t_{\jmath })=\mu _\jmath  e_r(t_{\jmath }^--\tau_\jmath ), & t= t_{\jmath },\,\jmath \in\mathbb{Z}^{+},\\
		e_r(t_0)=y_{r0}-x_{r0}, & r\in \textcolor{black}{\Omega _n}.
	\end{array}
	\right.
\end{equation}
\begin{definition}\label{def-finite-time-synchronization}
	Systems (\ref{delay-impulse-memristive-drive-system}) and (\ref{delay-impulse-memristive-response-system})  are said to be finite-time synchronized, if there exists a scalar $T_0\in\mathbb{R}^+$, such that 
	\textcolor{black}{the error vector $e(t)=(e_1(t),\,\cdots,\,e_n(t))^{T}$} converges to zero in finite time, i.e.,
	\begin{equation}
		\lim_{t\rightarrow T_0}e(t)=0,\nonumber
	\end{equation}
	and
	\begin{equation}
		e(t)\equiv 0, \qquad \forall t\geq T_0,\nonumber
	\end{equation}
	where $T_0$ \textcolor{black}{is} the settling time.
\end{definition}

\begin{assumption}\label{assumption-memristive-lipschitz}
	For every $s\in \textcolor{black}{\Omega _n}$, there exist constants $L_{s},\,M_{s}\in\mathbb{R}^+$, such that
	\begin{equation}
		\left | f_s(z_1)-f_s(z_2) \right | \le L_s \left | z_1-z_2 \right |,\nonumber
	\end{equation}
and
\begin{equation}
	\left | f_s(z_1)\right | \le M_s,\qquad \forall z_1,\,z_2\in\mathbb{R} .\nonumber
\end{equation}
\end{assumption}

\textcolor{black}{We now} present a control \textcolor{black}{scheme} to \textcolor{black}{achieve} finite-time synchronization of drive-response delayed impulsive CFOMNNs (\ref{delay-impulse-memristive-drive-system}) and (\ref{delay-impulse-memristive-response-system}).
We design the controller $u_r(t),\,r\in \textcolor{black}{\Omega _n}$ as
\begin{equation}\label{delay-impulse-memristive-controller}
	u_r(t)=-\lambda_{r}e_r(t)-\zeta _r \sgn (e_r(t)) -\vartheta _r\left | e_r(t) \right |^{\varrho  } \sgn (e_r(t)),
\end{equation}
where $\lambda_{r}>0,\,\zeta _r>0,\,\vartheta _r>0$, and 
$\varrho  \in (0,\,1)$.
\begin{theorem}\label{theo-memristive-stable-impulses}
	Under Assumption \ref{assumption-memristive-lipschitz} and the controller (\ref{delay-impulse-memristive-controller}), delayed impulsive CFOMNNs (\ref{delay-impulse-memristive-drive-system}) and (\ref{delay-impulse-memristive-response-system}) can achieve finite-time synchronization, if the following conditions hold:
\begin{itemize}
	\item [$(H_1)$] $2(a_{r}+\lambda_{r})-\sum_{s=1}^{n}\frac{1}{2}\left [ \left(\left | \overline{b}_{rs}+\underline{b}_{rs} \right | +\left | \overline{b}_{rs}-\underline{b}_{rs} \right | \right) L_s  +\left(\left | \overline{b}_{sr}+\underline{b}_{sr} \right | +\left | \overline{b}_{sr}-\underline{b}_{sr} \right | \right) L_r \right ]\ge 0$;
	\item [$(H_2)$] $\zeta _r -\sum_{s=1}^{n} \left | \overline{b}_{rs}-\underline{b}_{rs} \right | M_s\ge 0$;
	\item [$(H_3)$] \textcolor{black}{$0<\hat{\beta } _\jmath =\mu _\jmath ^{ 1-\varrho  }\le \hat{\beta} <1$;}
	\item [$(H_4)$] There are $\hat{N}$ impulses on time interval $[0,\,\hat{\Gamma}_{e_0})$ and the $\hat{N}$-th impulsive \textcolor{black}{instant} satisfies 
	\begin{equation}\label{memristive-theo-stable-impulse-impulse-instant}
		t_{\hat{N}}\leq \bigg(\frac{\hat{\gamma}^{\hat{N}}(1-\frac{\hat{\beta}}{\hat{\gamma}})}{1-\hat{\beta}}\hat{\Gamma}_{e_0}^\mathfrak{Q} -\frac{\hat{\beta}}{1-\hat{\beta}}\tau^\mathfrak{Q} \bigg)^{\frac{1}{\mathfrak{Q} }}, \nonumber
	\end{equation}
	and
	\begin{equation}\label{memristive-theo-stable-impulse-gamma-condition}
		\hat{\gamma}^{\hat{N}}(1-\frac{\hat{\beta}}{\hat{\gamma}})\hat{\Gamma}_{e_0}^\mathfrak{Q} -\hat{\beta}\tau^\mathfrak{Q} >0,\nonumber
	\end{equation}
	where $\hat{\gamma}\in(\hat{\beta},\,1)$, $\hat{\Gamma}_{e_0}=\left(\frac{\left ( \frac{1}{2} \sum_{r=1}^{n} e_r^2(t_0) \right ) ^{1 -\hat{\eta}}}{\hat{\alpha}}\right)^{\frac{1}{\mathfrak{Q} }}$, \textcolor{black}{$\hat{\alpha}=\frac{\hat{c} (1  -\hat{\eta})}{ \mathfrak{Q} }>0$}, $\hat{c}=2^{\frac{1+\varrho  }{2}}\mathop{\min}_{1\le r\le n}  \left \{ {\vartheta _r } \right \}$, and $\hat{\eta}=\frac{1+\varrho  }{2}$.
\end{itemize}
Moreover, the settling time \textcolor{black}{is}  $\hat{T}_1=(\hat{\gamma}^{\hat{N}})^\frac{1}{\mathfrak{Q} }\hat{\Gamma}_{e_0}$.
\end{theorem}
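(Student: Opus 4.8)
The plan is to reduce the finite-time synchronization problem to the finite-time stability of the error system (\ref{delay-impulse-memristive-error-system}) and then invoke Theorem \ref{theo-finite-stable-impulse}. To this end I would introduce the Lyapunov function $\mathcal{V}(t)=\frac{1}{2}\sum_{r=1}^{n}e_r^2(t)=\frac{1}{2}e^T(t)e(t)$, which is a PDF satisfying the two-sided bound (\ref{theo-stable-impulse-V-K-function}) with $\pi_1(s)=\pi_2(s)=\frac{1}{2}s^2\in\mathcal{K}$. The entire argument then reduces to verifying the three hypotheses (\ref{theo-stable-impulse-V-K-function}), (\ref{theo-stable-impulse-V-yuanxitiong}), (\ref{theo-stable-impulse-V-impulse-instant}) of Theorem \ref{theo-finite-stable-impulse} with the identifications $c=\hat{c}$, $\eta=\hat{\eta}$, $\beta_\jmath=\hat{\beta}_\jmath$, $\beta=\hat{\beta}$, $\gamma=\hat{\gamma}$, after which condition $(H_4)$ supplies exactly (\ref{theo-stable-impulse-impulse-instant}) and (\ref{theo-stable-impulse-gamma-condition}) and the theorem delivers the settling time $\hat{T}_1$.

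For the flow estimate (\ref{theo-stable-impulse-V-yuanxitiong}), I would differentiate $\mathcal{V}$ along (\ref{delay-impulse-memristive-error-system}) via Lemma \ref{lemma-caputo-micidao-xiandaozaiji-absolute-daoshu}.(6), obtaining ${_{t_{\jmath-1}}\mathfrak{T}^{\mathfrak{Q}}_{t}}\mathcal{V}(t)=\sum_{r=1}^{n}e_r(t)\cdot{_{t_{\jmath-1}}\mathfrak{T}^{\mathfrak{Q}}_{t}}e_r(t)$, and substitute the error dynamics together with the controller (\ref{delay-impulse-memristive-controller}). The resulting terms split into four groups to be bounded in order: the self-feedback and the $-\lambda_r e_r$ part of the control combine into $-(a_r+\lambda_r)e_r^2$; the Lipschitz term, using $|f_s(y_s)-f_s(x_s)|\le L_s|e_s|$ from Assumption \ref{assumption-memristive-lipschitz}, the coefficient bound $|b_{rs}^{**}+b_{rs}^*\gamma_{rs}|\le\frac{1}{2}(|\overline{b}_{rs}+\underline{b}_{rs}|+|\overline{b}_{rs}-\underline{b}_{rs}|)$ with $\gamma_{rs}\in co[-1,1]$, and Young's inequality $|e_r||e_s|\le\frac{1}{2}(e_r^2+e_s^2)$, yields a quadratic form whose coefficients are nonpositive precisely under $(H_1)$; the memristive mismatch term $\sum_s b_{rs}^*(\delta_{rs}-\gamma_{rs})f_s(y_s)$, bounded by $|\delta_{rs}-\gamma_{rs}|\le 2$ and $|f_s|\le M_s$, is dominated by the discontinuous control $-\zeta_r\sgn(e_r)$ exactly under $(H_2)$; and the power term $-\sum_r\vartheta_r|e_r|^{1+\varrho}$ survives.

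It then remains to convert $\sum_r\vartheta_r|e_r|^{1+\varrho}$ into a power of $\mathcal{V}$. The key inequality is the subadditivity of $x\mapsto x^{q}$ for $q=\frac{1+\varrho}{2}\in(0,1)$, giving $\sum_r(e_r^2)^{q}\ge(\sum_r e_r^2)^{q}=(2\mathcal{V})^{q}$, so that ${_{t_{\jmath-1}}\mathfrak{T}^{\mathfrak{Q}}_{t}}\mathcal{V}(t)\le-2^{\frac{1+\varrho}{2}}(\min_{1\le r\le n}\vartheta_r)\mathcal{V}^{\frac{1+\varrho}{2}}=-\hat{c}\mathcal{V}^{\hat{\eta}}$, which is (\ref{theo-stable-impulse-V-yuanxitiong}). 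For the jump condition (\ref{theo-stable-impulse-V-impulse-instant}), the update $e_r(t_\jmath)=\mu_\jmath e_r(t_\jmath^--\tau_\jmath)$ gives $\mathcal{V}(t_\jmath)=\mu_\jmath^2\mathcal{V}(t_\jmath^--\tau_\jmath)$, and since $1-\hat{\eta}=\frac{1-\varrho}{2}$ one has $\mu_\jmath^2=(\mu_\jmath^{1-\varrho})^{2/(1-\varrho)}=\hat{\beta}_\jmath^{1/(1-\hat{\eta})}$, matching (\ref{theo-stable-impulse-V-impulse-instant}); hypothesis $(H_3)$ forces $\hat{\beta}_\jmath\in(0,\hat{\beta}]$ with $\hat{\beta}\in(0,1)$, so the impulses are stabilizing and Theorem \ref{theo-finite-stable-impulse} applies verbatim. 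The main obstacle I anticipate lies in the bookkeeping for the Lipschitz and mismatch groups: one must symmetrize the double sum by swapping the roles of $r$ and $s$ so that the Lipschitz contribution collapses into the exact coefficient appearing in $(H_1)$, and one must carry the set-valued memristive parameters $\gamma_{rs},\delta_{rs}$ through the Filippov reformulation while retaining the uniform bounds on which $(H_1)$ and $(H_2)$ depend.
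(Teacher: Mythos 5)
Your proposal is correct and follows essentially the same route as the paper: the quadratic Lyapunov function $\mathcal{V}(t)=\frac{1}{2}e^T(t)e(t)$, Lemma \ref{lemma-caputo-micidao-xiandaozaiji-absolute-daoshu}.(6) for the flow estimate, the same symmetrization and Young-inequality bookkeeping under $(H_1)$--$(H_2)$, the subadditivity of $x\mapsto x^{(1+\varrho)/2}$ to reach $-\hat{c}\mathcal{V}^{\hat{\eta}}$, the identification $\mu_\jmath^2=\hat{\beta}_\jmath^{1/(1-\hat{\eta})}$ at the jumps, and the final appeal to Theorem \ref{theo-finite-stable-impulse}. No gaps.
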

\begin{proof}
	We define a piecewise right-continuous PDF function $\mathcal{V}( t ) \triangleq \mathcal{V}(e(t))$ for the error system (\ref{delay-impulse-memristive-error-system}) as
	\begin{equation}
		\mathcal{V}( t )=\frac{1}{2}e^T(t) e(t)=\frac{1}{2} \sum_{r=1}^{n} \left| e_r(t) \right|^2.\nonumber
	\end{equation}
When $t\in [t_{\jmath -1},\,t_{\jmath }),\,\jmath \in\mathbb{Z}^{+}$, by applying Lemma \ref{lemma-caputo-micidao-xiandaozaiji-absolute-daoshu}. (6), \textcolor{black}{we obtain} 
\begin{align}\label{proof-memristive-V-derivative}
	_{t_{\jmath -1}}\mathfrak{T} ^{\mathfrak{Q}  }_{t} \mathcal{V}( t )
	&=\textcolor{black}{\sum_{r=1}^{n}  e_r(t) \cdot {_{t_{\jmath -1}}\mathfrak{T} ^{\mathfrak{Q} }_{t}} e_r(t) } \nonumber\\
	&=  \sum_{r=1}^{n} \sgn(e_r(t))\left|e_r(t)\right| \cdot {_{t_{\jmath -1}}\mathfrak{T} ^{\mathfrak{Q} }_{t}} e_r(t)\nonumber\\
	&=\sum_{r=1}^{n} -(a_{r}+\lambda_{r})\left|e_r(t)\right|^2-\sum_{r=1}^{n} \vartheta _r \left | e_r(t) \right |^{1+\varrho  }  \nonumber \\
	&~ ~ ~ ~+\sum_{r=1}^{n}\sgn(e_r(t))\left|e_r(t)\right| \sum_{s=1}^{n} (b^{**}_{rs}+b^{*}_{rs}\gamma _{rs})(f_s(y_s(t))-f_s(x_s(t)))\nonumber \\
	&~ ~ ~ ~ +\sum_{r=1}^{n}\sgn(e_r(t))\left|e_r(t)\right|\sum_{s=1}^{n} b^{*}_{rs}(\delta _{rs}-\gamma _{rs})f_s(y_s(t))-\sum_{r=1}^{n} \zeta _r \left|e_r(t)\right|.
\end{align}
From Assumption \ref{assumption-memristive-lipschitz}, it yields that
\begin{align}\label{proof-memristive-V-derivative-f-Lipschitz}
	&~~~~ \sum_{r=1}^{n}\sgn(e_r(t))\left|e_r(t)\right| \sum_{s=1}^{n} (b^{**}_{rs}+b^{*}_{rs}\gamma _{rs})(f_s(y_s(t))-f_s(x_s(t)))\nonumber\\
	&\le \sum_{r=1}^{n}\sum_{s=1}^{n}\frac{1}{2}\left (\left | \overline{b}_{rs}+\underline{b}_{rs} \right | +\left | \overline{b}_{rs}-\underline{b}_{rs} \right | \right) L_s \left | e_r(t) \right |\cdot  \left | e_s(t) \right |\nonumber\\
	&\le \sum_{r=1}^{n}\sum_{s=1}^{n}\frac{1}{4}\left(\left | \overline{b}_{rs}+\underline{b}_{rs} \right | +\left | \overline{b}_{rs}-\underline{b}_{rs} \right | \right) L_s \left (| e_r(t) \right |^2+\left | e_s(t) \right |^2)\nonumber\\
	&= \sum_{r=1}^{n}\sum_{s=1}^{n}\frac{1}{4}\left \{ \left(\left | \overline{b}_{rs}+\underline{b}_{rs} \right | +\left | \overline{b}_{rs}-\underline{b}_{rs} \right | \right) L_s +\left(\left | \overline{b}_{sr}+\underline{b}_{sr} \right | +\left | \overline{b}_{sr}-\underline{b}_{sr} \right | \right) L_r \right \}  \left | e_r(t) \right |^2,
\end{align}
and
\begin{align}\label{proof-memristive-V-derivative-f-bound}
	&~~~~ \sum_{r=1}^{n}\sgn(e_r(t))\left|e_r(t)\right|\sum_{s=1}^{n} b^{*}_{rs}(\delta _{rs}-\gamma _{rs})f_s(y_s(t))-\sum_{r=1}^{n} \zeta _r \left|e_r(t)\right|\nonumber \\
	&\le \sum_{r=1}^{n}e_r(t)  \sum_{s=1}^{n} \left | \overline{b}_{rs}-\underline{b}_{rs} \right | M_s-\sum_{r=1}^{n} \zeta _r \left | e_r(t) \right | \nonumber \\
	&\le \sum_{r=1}^{n} \left \{ \sum_{s=1}^{n} \left | \overline{b}_{rs}-\underline{b}_{rs} \right | M_s- \zeta _r  \right \} \left | e_r(t) \right | .
\end{align}
Substituting (\ref{proof-memristive-V-derivative-f-Lipschitz}) and (\ref{proof-memristive-V-derivative-f-bound}) into (\ref{proof-memristive-V-derivative}), it follows from $\mathrm{(H_1)}$ and $\mathrm{(H_2)}$ that
\begin{align}\label{proof-memristive-V-derivative-fangsuo}
	_{t_{\jmath -1}}\mathfrak{T} ^{\mathfrak{Q}  }_{t} \mathcal{V}( t )
	&\le - \sum_{r=1}^{n} \Bigg \{ 2(a_{r}+\lambda_{r})-\sum_{s=1}^{n}\frac{1}{2}\left [ \left(\left | \overline{b}_{rs}+\underline{b}_{rs} \right | +\left | \overline{b}_{rs}-\underline{b}_{rs} \right | \right) L_s \right.\Bigg.\nonumber \\
	&\quad \Bigg.\left.  +\left(\left | \overline{b}_{sr}+\underline{b}_{sr} \right | +\left | \overline{b}_{sr}-\underline{b}_{sr} \right | \right) L_r \right ]  \Bigg \}  \frac{1}{2} \left | e_r(t) \right |^2 -\sum_{r=1}^{n} \left \{ \zeta _r -\sum_{s=1}^{n} \left | \overline{b}_{rs}-\underline{b}_{rs} \right | M_s \right \} \left | e_r(t) \right |\nonumber \\
	&~ ~ ~ ~ -\sum_{r=1}^{n} \vartheta _r \left | e_r(t) \right |^{1+\varrho  }\nonumber \\	
	&\le  -\sum_{r=1}^{n} \vartheta _r \left | e_r(t) \right |^{1+\varrho  }\nonumber \\	
	&\le  -2^{\frac{1+\varrho  }{2}}\mathop{\min}_{1\le r\le n}  \left \{ {\vartheta _r } \right \} \sum_{r=1}^{n} \left ( \frac{1}{2}\left | e_r(t) \right |^2 \right ) ^{\frac{1+\varrho  }{2} }\nonumber \\	
	&\le   -2^{\frac{1+\varrho  }{2}}\mathop{\min}_{1\le r\le n}  \left \{ \vartheta _r \right \}  \bigg ( \frac{1}{2} \sum_{r=1}^{n}  \left | e_r(t) \right |^2 \bigg ) ^{\frac{1+\varrho  }{2} }\nonumber \\	
	&= -\hat{c} \mathcal{V}^{\hat{\eta}}( t ),
\end{align}
where $\hat{c}=2^{\frac{1+\varrho  }{2}}\mathop{\min}_{1\le r\le n}  \left \{ {\vartheta _r } \right \}$ and $\hat{\eta}=\frac{1+\varrho  }{2}$. 

When $t= t_{\jmath },\,\jmath \in\mathbb{Z}^{+}$, one has
\begin{align}\label{proof-memristive-V-impulse-instant}
	\mathcal{V}( t_\jmath  )&=\frac{1}{2} \sum_{r=1}^{n} \left|e_r(t_\jmath )\right|^2\nonumber\\
	&=\frac{1}{2}\mu _\jmath ^2 \sum_{r=1}^{n}  \left|e_r(t_{\jmath }^--\tau_\jmath )\right|^2\nonumber\\
	&=\hat{\beta } _\jmath ^{\frac{1}{1 -\hat{\eta} } }\mathcal{V}( t_\jmath ^--\tau_\jmath  ),
\end{align}
where $\hat{\beta } _\jmath =\mu _\jmath ^{ 1-\varrho  }$.

Then, by using Theorem \ref{theo-finite-stable-impulse}, we conclude that systems (\ref{delay-impulse-memristive-drive-system}) and (\ref{delay-impulse-memristive-response-system}) are finite-time synchronized 
and the settling time is $\hat{T}_1=(\hat{\gamma}^{\hat{N}})^\frac{1}{\mathfrak{Q} }\hat{\Gamma}_{e_0}$, where $\hat{\gamma}\in(\hat{\beta},\,1)$, $\hat{\Gamma}_{e_0}=\left(\frac{\left ( \frac{1}{2} \sum_{r=1}^{n} e_r^2(t_0) \right ) ^{1 -\hat{\eta}}}{\hat{\alpha}}\right)^{\frac{1}{\mathfrak{Q} }}$, and $\hat{\alpha}=\frac{\hat{c} (1  -\hat{\eta})}{ \mathfrak{Q} }>0$.
\end{proof}

For impulsive CFOMNNs (\ref{delay-impulse-memristive-drive-system}) and (\ref{delay-impulse-memristive-response-system}), if $\tau_\jmath =0,\,\jmath \in \mathbb{Z}^+$, we can use Corollary \ref{coro-theo-stable-impulse} to obtain the following corollary directly. 

\begin{corollary}\label{coro-memristive-theo-stable-impulse}
	Consider \textcolor{black}{impulsive delays} $\tau_\jmath =0,\,\jmath \in \mathbb{Z}^+$. Under Assumption \ref{assumption-memristive-lipschitz} and the controller (\ref{delay-impulse-memristive-controller}), if the conditions ${(H_1)}-\, {(H_3)}$ in Theorem \ref{theo-memristive-stable-impulses} hold, then CFOMNNs (\ref{delay-impulse-memristive-drive-system}) and (\ref{delay-impulse-memristive-response-system}) are finite-time synchronized under delay-independent synchronizing impulses. Moreover, the settling time \textcolor{black}{is} $\check{T}_1=(\check{\gamma}^{\hat{N}})^\frac{1}{\mathfrak{Q} }\hat{\Gamma}_{e_0}$, if the impulsive sequence $\{t_{\jmath }\}_{\jmath \in\{1,\,\cdots,\,\hat{N}\}}$ satisfies
	 \begin{equation}\label{coro-memristive-theo-stable-impulse-impulse-instant}
			t_{\hat{N}}\leq \bigg(\frac{\check{\gamma}^{\hat{N}}(1-\frac{\hat{\beta}}{\check{\gamma}})}{1-\hat{\beta}}\bigg)^{\frac{1}{\mathfrak{Q} }}\hat{\Gamma}_{e_0}, \nonumber
		\end{equation}
	where $\check{\gamma}\in(\hat{\beta},\,1)$, $\hat{\Gamma}_{e_0}$
	is defined in Theorem \ref{theo-memristive-stable-impulses}.
\end{corollary}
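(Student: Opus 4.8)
The plan is to verify the hypotheses of Corollary \ref{coro-theo-stable-impulse} for the synchronization error system (\ref{delay-impulse-memristive-error-system}) in the delay-free setting $\tau_\jmath=0$, using the same Lyapunov candidate $\mathcal{V}(t)=\frac{1}{2}e^T(t)e(t)=\frac{1}{2}\sum_{r=1}^{n}|e_r(t)|^2$ as in Theorem \ref{theo-memristive-stable-impulses}. First I would observe that the continuous-flow estimate is \emph{identical} to that of the parent theorem: the controller (\ref{delay-impulse-memristive-controller}), Assumption \ref{assumption-memristive-lipschitz}, and the conditions $(H_1)$ and $(H_2)$ do not involve the impulsive delay, so the derivation (\ref{proof-memristive-V-derivative})--(\ref{proof-memristive-V-derivative-fangsuo}) carries over verbatim and yields $_{t_{\jmath-1}}\mathfrak{T}^{\mathfrak{Q}}_{t}\mathcal{V}(t)\le-\hat{c}\,\mathcal{V}^{\hat{\eta}}(t)$ on each interval $[t_{\jmath-1},\,t_\jmath)$, with $\hat{c}=2^{(1+\varrho)/2}\min_{1\le r\le n}\{\vartheta_r\}$ and $\hat{\eta}=(1+\varrho)/2$. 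This establishes the continuous decay condition (\ref{theo-stable-impulse-V-yuanxitiong}), and the class-$\mathcal K$ sandwich (\ref{theo-stable-impulse-V-K-function}) follows at once by taking $\pi_1(s)=\pi_2(s)=\frac{1}{2}s^2$.

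The key step is to recompute the jump behaviour for the delay-free impulse law. Setting $\tau_\jmath=0$ in (\ref{delay-impulse-memristive-error-system}) gives $e_r(t_\jmath)=\mu_\jmath e_r(t_\jmath^-)$, whence
\begin{equation}
\mathcal{V}(t_\jmath)=\frac{1}{2}\sum_{r=1}^{n}|e_r(t_\jmath)|^2=\mu_\jmath^2\cdot\frac{1}{2}\sum_{r=1}^{n}|e_r(t_\jmath^-)|^2=\mu_\jmath^2\,\mathcal{V}(t_\jmath^-).\nonumber
\end{equation}
I would then verify the exponent arithmetic: since $1-\hat{\eta}=(1-\varrho)/2$, the prescribed factor satisfies $\hat{\beta}_\jmath^{1/(1-\hat{\eta})}=(\mu_\jmath^{1-\varrho})^{2/(1-\varrho)}=\mu_\jmath^2$, so the jump reads $\mathcal{V}(t_\jmath)=\hat{\beta}_\jmath^{1/(1-\hat{\eta})}\mathcal{V}(t_\jmath^-)$, which is precisely the delay-free impulse inequality (\ref{coro-theo-stable-impulse-V-impulse-instant}). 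Condition $(H_3)$ supplies the admissible scalars $\hat{\beta}_\jmath\in(0,\,\hat{\beta}]$ and $\hat{\beta}\in(0,\,1)$ required there.

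Having matched every hypothesis, I would invoke Corollary \ref{coro-theo-stable-impulse} with the identifications $\beta\leftrightarrow\hat{\beta}$, $\tilde{\gamma}\leftrightarrow\check{\gamma}$ (so that $\check{\gamma}\in(\hat{\beta},\,1)$), and $\Gamma_{\mathcal{S}_0}\leftrightarrow\hat{\Gamma}_{e_0}$, under the impulsive-sequence bound (\ref{coro-memristive-theo-stable-impulse-impulse-instant}). This yields $\mathcal{V}(t)\equiv0$ for $t\ge\check{T}_1$ together with $\lim_{t\to\check{T}_1}\mathcal{V}(t)=0$, hence $e(t)\equiv0$ for $t\ge\check{T}_1$ and $\lim_{t\to\check{T}_1}e(t)=0$, giving finite-time synchronization of (\ref{delay-impulse-memristive-drive-system}) and (\ref{delay-impulse-memristive-response-system}) with settling time $\check{T}_1=(\check{\gamma}^{\hat{N}})^{1/\mathfrak{Q}}\hat{\Gamma}_{e_0}$. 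The only genuine point requiring care is the exponent check in the jump step; everything else is a direct transcription from Theorem \ref{theo-memristive-stable-impulses} with Corollary \ref{coro-theo-stable-impulse} replacing Theorem \ref{theo-finite-stable-impulse}, which is why the detailed argument can be condensed here.
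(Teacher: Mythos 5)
Your proposal is correct and follows essentially the same route as the paper: the paper obtains this corollary as a direct application of Corollary \ref{coro-theo-stable-impulse}, reusing the Lyapunov estimates (\ref{proof-memristive-V-derivative-fangsuo}) and (\ref{proof-memristive-V-impulse-instant}) from Theorem \ref{theo-memristive-stable-impulses} with $\tau_\jmath=0$, exactly as you do. Your explicit exponent check $\hat{\beta}_\jmath^{1/(1-\hat{\eta})}=\mu_\jmath^{2}$ is the same arithmetic implicit in the paper's jump computation, so no gap remains.
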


Next, we study the case that CFOMNNs (\ref{delay-impulse-memristive-drive-system}) and (\ref{delay-impulse-memristive-response-system}) are subject to desynchronizing delayed impulses. 
\textcolor{black}{In light of} Theorem \ref{theo-finite-destable-impulse}, we \textcolor{black}{deduce} the finite-time synchronization result \textcolor{black}{as follows}. The \textcolor{black}{rigorous} proof is \textcolor{black}{similar} to the proof of Theorem \ref{theo-memristive-stable-impulses}, which is \textcolor{black}{omitted} here due to limited space.
\begin{theorem}\label{theo-memristive-destable-impulses}
	Under Assumption \ref{assumption-memristive-lipschitz} and the controller (\ref{delay-impulse-memristive-controller}), delayed impulsive CFOMNNs (\ref{delay-impulse-memristive-drive-system}) and (\ref{delay-impulse-memristive-response-system}) can achieve finite-time synchronization, if the conditions ${(H_1)}$ and ${(H_2)}$ in Theorem \ref{theo-memristive-stable-impulses} and
	\begin{itemize}
		\item [$(H_3^*)$] $0<\hat{\beta } _\jmath =\textcolor{black}{\mu _\jmath ^{ 1-\varrho  } } \le \hat{\beta}$, and $\hat{\beta}\ge 1$;
		\item [$(H_4^*)$] The impulsive sequence $\{t_{\jmath }\}_{\jmath \in\mathbb{Z}^{+}}$ satisfies
		\begin{equation}\label{memristive-theo-destable-impulse-impulse-instant}
			\hat{N}_0 \triangleq \min \{\jmath \in \mathbb{Z}^{+}:\,t_{\jmath }\ge (\hat{\gamma} ^{\jmath -1})^{\frac{1}{\mathfrak{Q} }}\hat{\Gamma} _{e_0} \}<+\infty ,\nonumber
		\end{equation}
		and
		\begin{equation}\label{memristive-theo-destable-impulse-gamma-condition}
			\hat{\gamma} \ge \hat{\beta} +\frac{\hat{\beta} \tau ^\mathfrak{Q} }{\hat{\Gamma} _{e_0}^\mathfrak{Q} } ,\nonumber
		\end{equation}
		where $\hat{\gamma}\in[\hat{\beta},\,+\infty )$, $\hat{\Gamma}_{e_0}$
		is defined in Theorem \ref{theo-memristive-stable-impulses}.
	\end{itemize}
	hold. Moreover, the settling time \textcolor{black}{is} $\hat{T}_2=(\hat{\gamma}^{\hat{N}_0-1})^\frac{1}{\mathfrak{Q} }\hat{\Gamma}_{e_0}$.
\end{theorem}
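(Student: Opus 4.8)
The plan is to mirror the argument of Theorem~\ref{theo-memristive-stable-impulses} almost verbatim, since the only structural change is that the impulses are now desynchronizing ($\hat{\beta}\ge 1$), so the reduction should be to Theorem~\ref{theo-finite-destable-impulse} rather than to Theorem~\ref{theo-finite-stable-impulse}. Concretely, I would take the same Lyapunov candidate $\mathcal{V}(t)=\frac{1}{2}e^T(t)e(t)=\frac{1}{2}\sum_{r=1}^{n}|e_r(t)|^2$ for the error system~(\ref{delay-impulse-memristive-error-system}), which is a piecewise right-continuous PDF and automatically satisfies the sandwich bound~(\ref{theo-stable-impulse-V-K-function}) with $\pi_1(s)=\pi_2(s)=\frac{1}{2}s^2\in\mathcal K$ (measured in $\|e\|$). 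The task then reduces to verifying that the two hypotheses of Theorem~\ref{theo-finite-destable-impulse} --- the continuous-time inequality~(\ref{theo-stable-impulse-V-yuanxitiong}) and the impulse jump relation~(\ref{theo-stable-impulse-V-impulse-instant}) --- hold for this $\mathcal{V}$ under the parameter dictionary $c\mapsto\hat{c}$, $\eta\mapsto\hat{\eta}$, $\beta\mapsto\hat{\beta}$, $\beta_\jmath\mapsto\hat{\beta}_\jmath$, $\gamma\mapsto\hat{\gamma}$, $\Gamma_{\mathcal{S}_0}\mapsto\hat{\Gamma}_{e_0}$, and $N_0\mapsto\hat{N}_0$.

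For the continuous part, on each interval $[t_{\jmath-1},t_\jmath)$ I would compute $_{t_{\jmath-1}}\mathfrak{T}^{\mathfrak{Q}}_t\mathcal{V}(t)$ via Lemma~\ref{lemma-caputo-micidao-xiandaozaiji-absolute-daoshu}.(6), substitute the error dynamics, and bound the cross terms exactly as in (\ref{proof-memristive-V-derivative})--(\ref{proof-memristive-V-derivative-fangsuo}): Assumption~\ref{assumption-memristive-lipschitz} together with $(H_1)$ eliminates the Lipschitz synchronization term, $(H_2)$ absorbs the memristive mismatch term against the $\zeta_r$-feedback, and the discontinuous control leaves $-\sum_r\vartheta_r|e_r|^{1+\varrho}$; a power inequality of the form $\sum_r x_r^{p}\ge(\sum_r x_r)^{p}$ with $p=\hat{\eta}\in(0,1)$ then yields $_{t_{\jmath-1}}\mathfrak{T}^{\mathfrak{Q}}_t\mathcal{V}(t)\le-\hat{c}\,\mathcal{V}^{\hat{\eta}}(t)$, where $\hat{c}=2^{\frac{1+\varrho}{2}}\min_{1\le r\le n}\{\vartheta_r\}$ and $\hat{\eta}=\frac{1+\varrho}{2}\in(0,1)$. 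Since $(H_1)$ and $(H_2)$ are unchanged from Theorem~\ref{theo-memristive-stable-impulses}, this step is literally the earlier computation and needs no modification. For the impulse relation, the update $e_r(t_\jmath)=\mu_\jmath e_r(t_\jmath^--\tau_\jmath)$ gives $\mathcal{V}(t_\jmath)=\frac{1}{2}\mu_\jmath^2\sum_r|e_r(t_\jmath^--\tau_\jmath)|^2=\hat{\beta}_\jmath^{\frac{1}{1-\hat{\eta}}}\mathcal{V}(t_\jmath^--\tau_\jmath)$ with $\hat{\beta}_\jmath=\mu_\jmath^{\,1-\varrho}$, exactly as in (\ref{proof-memristive-V-impulse-instant}); the difference is only that $(H_3^*)$ now places $\hat{\beta}_\jmath\in(0,\hat{\beta}]$ with $\hat{\beta}\ge1$, matching the destabilizing regime $\beta\in[1,+\infty)$ of Theorem~\ref{theo-finite-destable-impulse}.

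Having established these two inequalities, I would check that $(H_4^*)$ is precisely the translation of the impulse-timing hypotheses~(\ref{theo-destable-impulse-impulse-instant}) and~(\ref{theo-destable-impulse-gamma-condition}) under the dictionary above, with $\hat{\gamma}\in[\hat{\beta},+\infty)$ and $\hat{\Gamma}_{e_0}=\big((\tfrac{1}{2}\sum_r e_r^2(t_0))^{1-\hat{\eta}}/\hat{\alpha}\big)^{1/\mathfrak{Q}}$ playing the role of $\Gamma_{\mathcal{S}_0}$ from Lemma~\ref{lemma-finite-time-no-impulse}; this identification is consistent because $\hat{\alpha}=\hat{c}(1-\hat{\eta})/\mathfrak{Q}$ mirrors $\alpha$ and $\mathcal{V}(t_0)=\tfrac{1}{2}\sum_r e_r^2(t_0)$. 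Theorem~\ref{theo-finite-destable-impulse} then delivers FTS of the error system with settling time $\hat{T}_2=(\hat{\gamma}^{\hat{N}_0-1})^{1/\mathfrak{Q}}\hat{\Gamma}_{e_0}$, i.e. $\lim_{t\to\hat{T}_2}e(t)=0$ and $e(t)\equiv0$ for $t\ge\hat{T}_2$, which is exactly finite-time synchronization of~(\ref{delay-impulse-memristive-drive-system}) and~(\ref{delay-impulse-memristive-response-system}). The only genuine subtlety --- and the part I would write out most carefully --- is the exponent bookkeeping in the jump relation: one must confirm that $\mu_\jmath^2=(\mu_\jmath^{1-\varrho})^{2/(1-\varrho)}=\hat{\beta}_\jmath^{1/(1-\hat{\eta})}$, which uses $1-\hat{\eta}=\frac{1-\varrho}{2}$ and requires $\mu_\jmath>0$ (guaranteed by $(H_3^*)$) so that the non-integer power $\mu_\jmath^{1-\varrho}$ is well defined. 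Everything else is a direct citation of the already-proven Theorem~\ref{theo-finite-destable-impulse}, so I expect no further difficulty.
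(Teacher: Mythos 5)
Your proposal is correct and matches the paper's intent exactly: the paper omits this proof, stating only that it is analogous to the proof of Theorem \ref{theo-memristive-stable-impulses} but invoking Theorem \ref{theo-finite-destable-impulse} in place of Theorem \ref{theo-finite-stable-impulse}, which is precisely the reduction you carry out (same Lyapunov function, same derivative and jump estimates, with the parameter dictionary shifted to the destabilizing regime $\hat{\beta}\ge 1$). Your explicit check of the exponent identity $\mu_\jmath^2=\hat{\beta}_\jmath^{1/(1-\hat{\eta})}$ is a welcome addition the paper leaves implicit.
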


For impulsive CFOMNNs (\ref{delay-impulse-memristive-drive-system}) and (\ref{delay-impulse-memristive-response-system}), if impulses are delay-independent, i.e., $\tau_\jmath =0,\,\jmath \in \mathbb{Z}^+$, we can use Corollary \ref{coro-theo-destable-impulse} to obtain the following corollary 
directly. 

\begin{corollary}\label{coro-memristive-theo-destable-impulse}
	Consider \textcolor{black}{impulsive delays} $\tau_\jmath =0,\,\jmath \in \mathbb{Z}^+$. Under Assumption \ref{assumption-memristive-lipschitz} and the controller (\ref{delay-impulse-memristive-controller}), CFOMNNs (\ref{delay-impulse-memristive-drive-system}) and (\ref{delay-impulse-memristive-response-system}) are finite-time synchronized, if the conditions ${(H_1)}$ and ${(H_2)}$ in Theorem \ref{theo-memristive-stable-impulses}, ${(H_3^*)}$ in Theorem \ref{theo-memristive-destable-impulses} and
	\begin{itemize}
		\item [$(H_4^\Diamond)$] The impulsive sequence $\{t_{\jmath }\}_{\jmath \in\mathbb{Z}^{+}}$ satisfies
		\begin{equation}\label{coro-memristive-theo-destable-impulse-impulse-instant}
			\bar{N}_0 \triangleq \min \{\jmath \in \mathbb{Z}^{+}:\,t_{\jmath }\ge (\hat{\beta} ^{\jmath -1})^{\frac{1}{\mathfrak{Q} }}\hat{\Gamma} _{e_0} \}<+\infty ,\nonumber
		\end{equation}
	\end{itemize}
	hold. Moreover, the settling time \textcolor{black}{is} $\check{T}_2=(\hat{\beta}^{\bar{N}_0-1})^\frac{1}{\mathfrak{Q} }\hat{\Gamma}_{e_0}$, where $\hat{\Gamma}_{e_0}$
	is defined in Theorem \ref{theo-memristive-stable-impulses}.
\end{corollary}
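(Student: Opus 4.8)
The plan is to reuse the Lyapunov construction from the proof of Theorem~\ref{theo-memristive-stable-impulses} verbatim and then invoke the delay-independent destabilizing criterion of Corollary~\ref{coro-theo-destable-impulse} in place of the stabilizing one. First I would set $\mathcal{V}(t)=\frac{1}{2}e^{T}(t)e(t)=\frac{1}{2}\sum_{r=1}^{n}|e_r(t)|^2$ for the error system~(\ref{delay-impulse-memristive-error-system}). Choosing $\pi_1(s)=\pi_2(s)=\frac{1}{2}s^2$ immediately gives $\pi_1,\pi_2\in\mathcal K$ and establishes the two-sided bound~(\ref{theo-stable-impulse-V-K-function}), so $\mathcal{V}$ is an admissible piecewise right-continuous PDF.

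For the continuous phase $t\in[t_{\jmath-1},t_{\jmath})$, the CFO-derivative estimate is identical to the computation in the proof of Theorem~\ref{theo-memristive-stable-impulses}: applying Lemma~\ref{lemma-caputo-micidao-xiandaozaiji-absolute-daoshu}.(6), substituting the dynamics of~(\ref{delay-impulse-memristive-error-system}), invoking the Lipschitz and boundedness estimates of Assumption~\ref{assumption-memristive-lipschitz}, and then absorbing the quadratic cross terms and the sign terms through $(H_1)$ and $(H_2)$ respectively, one reaches $_{t_{\jmath-1}}\mathfrak{T}^{\mathfrak{Q}}_{t}\mathcal{V}(t)\le-\hat{c}\,\mathcal{V}^{\hat{\eta}}(t)$ with $\hat{c}=2^{(1+\varrho)/2}\min_{1\le r\le n}\{\vartheta_r\}$ and $\hat{\eta}=(1+\varrho)/2\in(0,1)$. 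This is exactly~(\ref{theo-stable-impulse-V-yuanxitiong}) with $c=\hat{c}$ and $\eta=\hat{\eta}$, and since $(H_1)$ and $(H_2)$ are hypotheses of the corollary, no new estimate is needed here.

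Next I would verify the jump relation. With $\tau_\jmath=0$ the impulse map yields $\mathcal{V}(t_\jmath)=\frac{1}{2}\mu_\jmath^{2}\sum_{r=1}^{n}|e_r(t_\jmath^-)|^2=\mu_\jmath^{2}\,\mathcal{V}(t_\jmath^-)$. The key algebraic check is $\frac{1}{1-\hat{\eta}}=\frac{2}{1-\varrho}$, which turns the quadratic factor into $\mu_\jmath^{2}=(\mu_\jmath^{1-\varrho})^{1/(1-\hat{\eta})}=\hat{\beta}_\jmath^{1/(1-\hat{\eta})}$ with $\hat{\beta}_\jmath=\mu_\jmath^{1-\varrho}$. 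This is precisely the impulse condition~(\ref{coro-theo-stable-impulse-V-impulse-instant}) with $\beta_\jmath=\hat{\beta}_\jmath$, and by $(H_3^*)$ we have $\hat{\beta}_\jmath\in(0,\hat{\beta}]$ with $\hat{\beta}\ge1$, placing us squarely in the destabilizing regime of Corollary~\ref{coro-theo-destable-impulse}.

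Finally, all hypotheses of Corollary~\ref{coro-theo-destable-impulse} hold under the identification $c=\hat{c}$, $\eta=\hat{\eta}$, $\beta=\hat{\beta}$, $\beta_\jmath=\hat{\beta}_\jmath$, and $\Gamma_{\mathcal{S}_0}=\hat{\Gamma}_{e_0}$, while $(H_4^\Diamond)$ is exactly the admissibility condition~(\ref{coro-destable-impulse-impulse-instant}) with $\tilde N_0=\bar N_0$. Invoking Corollary~\ref{coro-theo-destable-impulse} then delivers FTS of the error system with settling time $\check{T}_2=(\hat{\beta}^{\bar N_0-1})^{1/\mathfrak{Q}}\hat{\Gamma}_{e_0}$, and by Definition~\ref{def-finite-time-synchronization} this is finite-time synchronization of~(\ref{delay-impulse-memristive-drive-system}) and~(\ref{delay-impulse-memristive-response-system}). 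The only genuine verification, rather than a real obstacle, is the exponent matching $\frac{1}{1-\hat{\eta}}=\frac{2}{1-\varrho}$ that converts the quadratic jump factor into the form required by Corollary~\ref{coro-theo-destable-impulse}; everything else is an immediate transcription of the stabilizing argument.
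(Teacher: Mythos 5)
Your proposal is correct and follows exactly the route the paper intends: the paper omits an explicit proof, stating only that the corollary follows directly from Corollary~\ref{coro-theo-destable-impulse} once the Lyapunov construction of Theorem~\ref{theo-memristive-stable-impulses} is in place, and your argument fills in precisely those steps (the $-\hat{c}\mathcal{V}^{\hat{\eta}}$ estimate via $(H_1)$--$(H_2)$, the jump identity $\mu_\jmath^{2}=\hat{\beta}_\jmath^{1/(1-\hat{\eta})}$, and the parameter identification with $\Gamma_{\mathcal{S}_0}=\hat{\Gamma}_{e_0}$). No gaps.
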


\section{Examples}\label{sec: Numerical examples}

\begin{example}\label{example-general-system}
Consider the following CFODIS:
\begin{equation} \label{example-delay-impulse-system}
	\left\{
	\begin{array}{lcl}
		_{t_{\jmath -1}}\mathfrak{T} ^{0.98}_{t} \mathcal{S}
		(t)=-\frac{1}{3}   \mathcal{S}
		   ^\frac{1}{2}
		 , & t\geq t_0,\,t\in [t_{\jmath -1},\,t_{\jmath }),\\
		\mathcal{S} (t_{\jmath })=\alpha_\jmath  \mathcal{S}
		(t_{\jmath }^--\tau_\jmath ), & t= t_{\jmath },\,\jmath \in\mathbb{Z}^{+},\\
		\mathcal{S} (0)=0.5.
	\end{array}
	\right.
\end{equation}
We select the Lyapunov function $\mathcal{V}( t )=  \mathcal{S} ^2 (t)   $. Then, (\ref{theo-stable-impulse-V-yuanxitiong}) holds with $c=\frac{2}{3},\,\eta =\frac{3}{4}$. When there is no impulse, we can compute the settling time as $\Gamma_{\mathcal{S} _0}=4.280$ according to Lemma \ref{lemma-finite-time-no-impulse}. Figure \ref{fig-general-system-stable-impulse} (blue) and Figure \ref{fig-general-system-destable-impulse} (blue) show the trajectory and settling time of the system  (\ref{example-delay-impulse-system}) without impulses. \textcolor{black}{Next}, we shall consider the stabilizing and destabilizing delayed impulses, respectively.

$Case\,(i)$: (Stabilizing delayed impulses) We take $\alpha_\jmath =0.71$ and $\tau_\jmath   =0.05$ \textcolor{black}{for $\jmath \in\mathbb{Z}^{+}$}. Then, we have $\beta =0.843$. We choose $\gamma =0.9$. 
Suppose 
the impulsive instants are $\{0.2,\,0.4,\,4.4,\,\cdots\}$, which \textcolor{black}{meet} (\ref{theo-stable-impulse-impulse-instant}) and $N=2$. Hence, all conditions in Theorem \ref{theo-finite-stable-impulse} hold. The system (\ref{example-delay-impulse-system}) can achieve FTS with the settling time $T_1= 3.452$, which is shown in Figure \ref{fig-general-system-stable-impulse} (red).

Next, with the same parameters, we take the impulsive instants as $\{0.1,\,0.26,\,0.48,\,0.7,\,4.4,\\ \cdots\}$, which satisfy (\ref{theo-stable-impulse-impulse-instant}) and $N=4$. Hence, all conditions in Theorem \ref{theo-finite-stable-impulse} also hold. It can be observed from Figure \ref{fig-general-system-stable-impulse} (green) that the system (\ref{example-delay-impulse-system}) can achieve FTS with the settling time $T_1= 2.784$. 
Moreover, Figure \ref{fig-general-system-stable-impulse} also shows that the settling time decreases under the stabilizing delayed impulses.
\begin{figure}
	\begin{minipage}[t]{1.0\linewidth}
		\centering
		\includegraphics[width=3.5in]{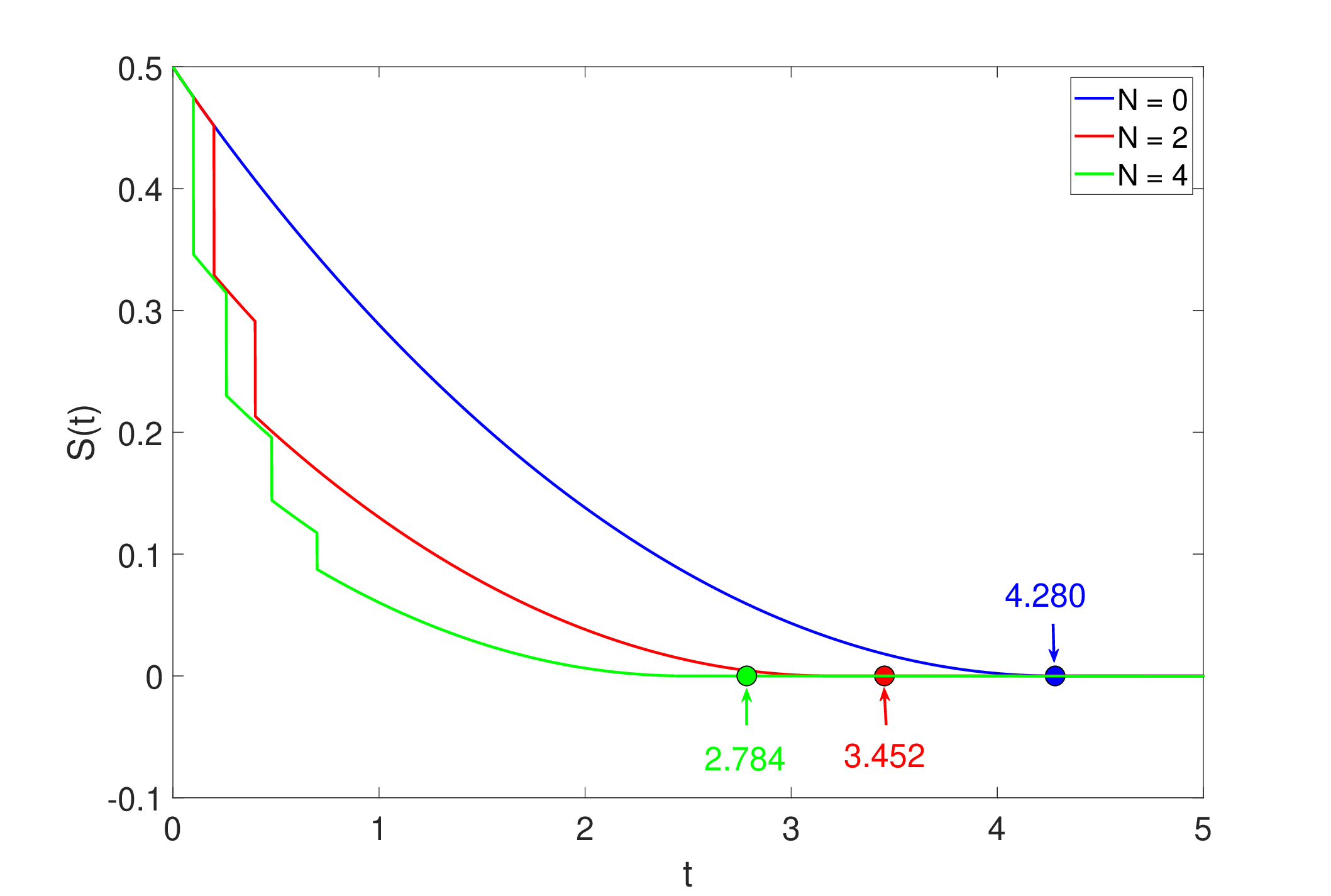}
	\end{minipage}
	\begin{center}
		\caption{Simulation results of $Case\,(i)$ in Example \ref{example-general-system}.}
		\label{fig-general-system-stable-impulse}
	\end{center}
\end{figure}

$Case\,(ii)$: (Destabilizing delayed impulses) We take $\alpha_\jmath =1.72$ and $\tau _\jmath =0.45$ \textcolor{black}{for $\jmath \in\mathbb{Z}^{+}$}. Then, we have $\beta =1.311$. We choose $\gamma =1.5$. Suppose the impulsive instants are $\{1.5,\, 2,\, 2.8,\, 15.5,\,\cdots\}$, which satisfy (\ref{theo-destable-impulse-impulse-instant}) and $N_0=4$. Hence, all conditions in Theorem \ref{theo-finite-destable-impulse} \textcolor{black}{hold}. It can be observed from Figure \ref{fig-general-system-destable-impulse} (red) that the system (\ref{example-delay-impulse-system}) can achieve FTS with the settling time $T_2= 14.810$. Moreover, compared with the blue curve in
Figure \ref{fig-general-system-destable-impulse}, we conclude that the settling time increases under the destabilizing delayed impulses.

Next, under the same parameters, we consider \textcolor{black}{impulsive delays} $\tau_\jmath  =0.1$. Hence, the conditions of Theorem \ref{theo-finite-destable-impulse} also \textcolor{black}{hold}. \textcolor{black}{We can see} from Figure \ref{fig-general-system-destable-impulse} (green) that the system (\ref{example-delay-impulse-system}) can achieve FTS.
Moreover, compared with the red curve in Figure \ref{fig-general-system-destable-impulse}, one observes that \textcolor{black}{impulsive delays have} a negative effect.
\begin{figure}
	\begin{minipage}[t]{1.0\linewidth}
		\centering
		\includegraphics[width=3.5in]{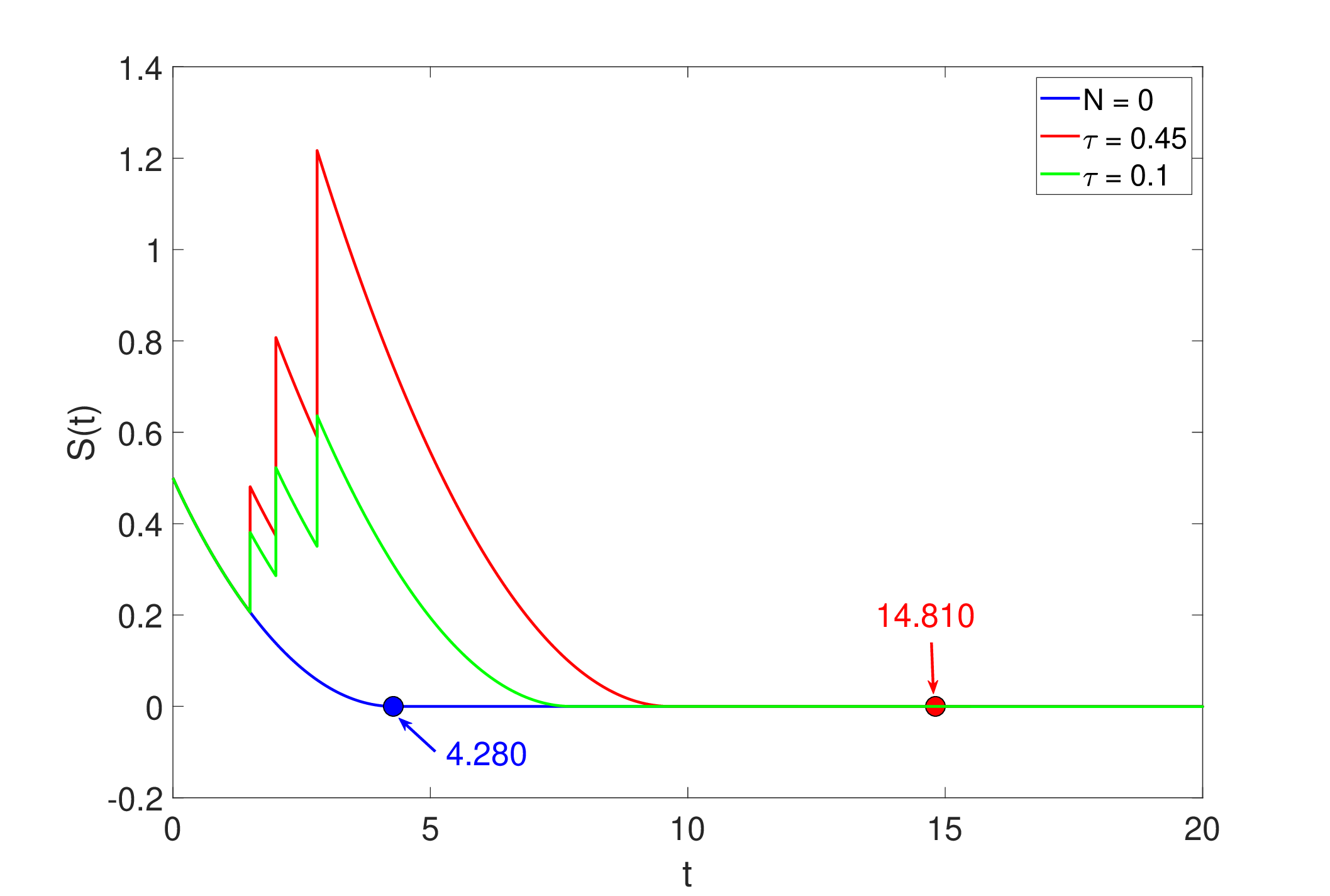}
	\end{minipage}
	\begin{center}
		\caption{Simulation results of $Case\,(ii)$ in Example \ref{example-general-system}.}
		\label{fig-general-system-destable-impulse}
	\end{center}
\end{figure}

\end{example}

\begin{example}\label{example-nn}
The parameters of drive-response CFOMNNs (\ref{delay-impulse-memristive-drive-system}) and (\ref{delay-impulse-memristive-response-system}) are selected as follows:
\begin{flalign*}
	b_{11} (x_1(t))=\left\{
	\begin{aligned}
		1.4,    \qquad   & \left | x_1(t) \right | \le  1\\
		1.5,    \qquad  & \left | x_1(t) \right | >  1\\
	\end{aligned}
	\right.
	, \qquad	b_{11} (y_1(t))=\left\{
	\begin{aligned}
		1.4,    \qquad   & \left | y_1(t) \right | \le  1\\
		1.5,    \qquad  & \left | y_1(t) \right | >  1\\
	\end{aligned}
	\right.,
\end{flalign*}
\begin{flalign*}
	b_{12} (x_1(t))=\left\{
	\begin{aligned}
		-1.3,    \qquad   & \left | x_1(t) \right | \le  1\\
		-1.2,    \qquad  & \left | x_1(t) \right | >  1\\
	\end{aligned}
	\right.
	, \qquad	b_{12} (y_1(t))=\left\{
	\begin{aligned}
		-1.3,    \qquad   & \left | y_1(t) \right | \le  1\\
		-1.2,    \qquad  & \left | y_1(t) \right | >  1\\
	\end{aligned}
	\right.,
\end{flalign*}
\begin{flalign*}
	b_{21} (x_2(t))=\left\{
	\begin{aligned}
		-2.1,    \qquad   & \left | x_2(t) \right | \le  1\\
		-2.6,    \qquad  & \left | x_2(t) \right | >  1\\
	\end{aligned}
	\right.
	, \qquad	b_{21} (y_2(t))=\left\{
	\begin{aligned}
		-2.1,    \qquad   & \left | y_2(t) \right | \le  1\\
		-2.6,    \qquad  & \left | y_2(t) \right | >  1\\
	\end{aligned}
	\right.,
\end{flalign*}
\begin{flalign*}
	b_{22} (x_2(t))=\left\{
	\begin{aligned}
		2.7,    \qquad   & \left | x_2(t) \right | \le  1\\
		2.3,    \qquad  & \left | x_2(t) \right | >  1\\
	\end{aligned}
	\right.
	, \qquad	b_{22} (y_2(t))=\left\{
	\begin{aligned}
		2.7,    \qquad   & \left | y_2(t) \right | \le  1\\
		2.3,    \qquad  & \left | y_2(t) \right | >  1\\
	\end{aligned}
	\right.,
\end{flalign*}
$\mathfrak{Q} = 0.93$, $\textcolor{black}{\Omega _2}=\{1,\,2\}$, $a_1=1.7,\,a_2=2.2$, and $I_1=I_2=0$. The initial values are set as $x_{1}(0)=2.5,\,x_2(0)=-3.9,\,y_{1}(0)=-4.7$, and $y_2(0)=9.8$. We take activation functions as $f_1(x_1(t))=1.3\tanh(x(t))$ and $f_2(x_2(t))=1.5\sin(x(t))$. Then, we have $L_1=1.3,\,M_1=1.3,\,L_2=1.5$, and $M_2=1.5$. We choose $\lambda_{1}=3.5,\,\lambda_{2}=4.9,\,\zeta _1=0.4,\,\zeta _2=1.5,\,\vartheta _1=1.1,\,\vartheta _2=1.2$, and $\varrho=0.3$. Hence, the conditions ${(H_1)}$ \textcolor{black}{and} ${(H_2)}$ in Theorem \ref{theo-memristive-stable-impulses} are satisfied. When there is no impulse, we can calculate the settling time as $\hat{\Gamma}_{e_0}=9.630$ according to Lemma \ref{lemma-finite-time-no-impulse}.
Figure \ref{fig-nn-stableimpulse_noimpulse} shows the error trajectories and settling time of drive-response CFOMNNs (\ref{delay-impulse-memristive-drive-system}) and (\ref{delay-impulse-memristive-response-system}) without impulses. \textcolor{black}{Next}, we shall consider the synchronizing and desynchronizing delayed impulses, respectively. 
\begin{figure}
	\begin{minipage}[t]{1.0\linewidth}
		\centering
		\includegraphics[width=3.5in]{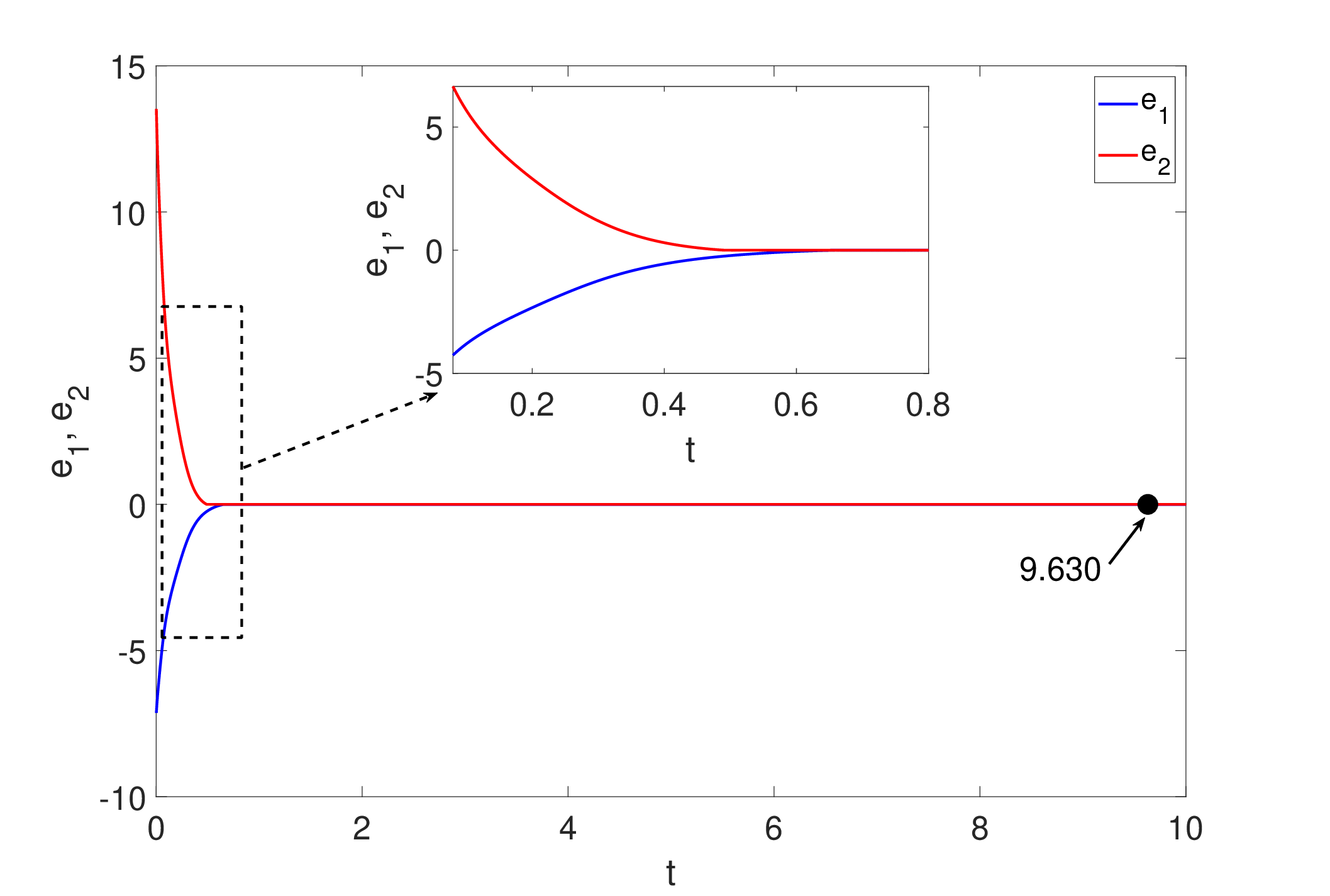}
	\end{minipage}
	\begin{center}
		\caption{Error trajectories of systems (\ref{delay-impulse-memristive-drive-system}) and (\ref{delay-impulse-memristive-response-system}) without impulses in Example \ref{example-nn}.}
		\label{fig-nn-stableimpulse_noimpulse}
	\end{center}
\end{figure}

$Case\,(i)$: (Synchronizing delayed impulses) We take $\mu _\jmath =0.4$ and $\tau_\jmath  =0.01$ for $\jmath \in\mathbb{Z}^{+}$. Then, we have $\hat{\beta} =0.527$. We choose $\hat{\gamma} =0.577$. Suppose the impulsive instants are $\{0.1,\,0.3,\,9.8,\,\cdots\}$, which satisfy ${(H_4)}$ and $\hat{N}=2$. 
Hence, all conditions in Theorem \ref{theo-memristive-stable-impulses} hold. The delayed impulsive CFOMNNs (\ref{delay-impulse-memristive-drive-system}) and (\ref{delay-impulse-memristive-response-system}) can achieve finite-time synchronization with the settling time $\hat{T}_1=2.946$, which is shown in Figure \ref{fig-nn-stableimpulse_delay0point02}.
\begin{figure}
	\begin{minipage}[t]{1.0\linewidth}
		\centering
		\includegraphics[width=3.5in]{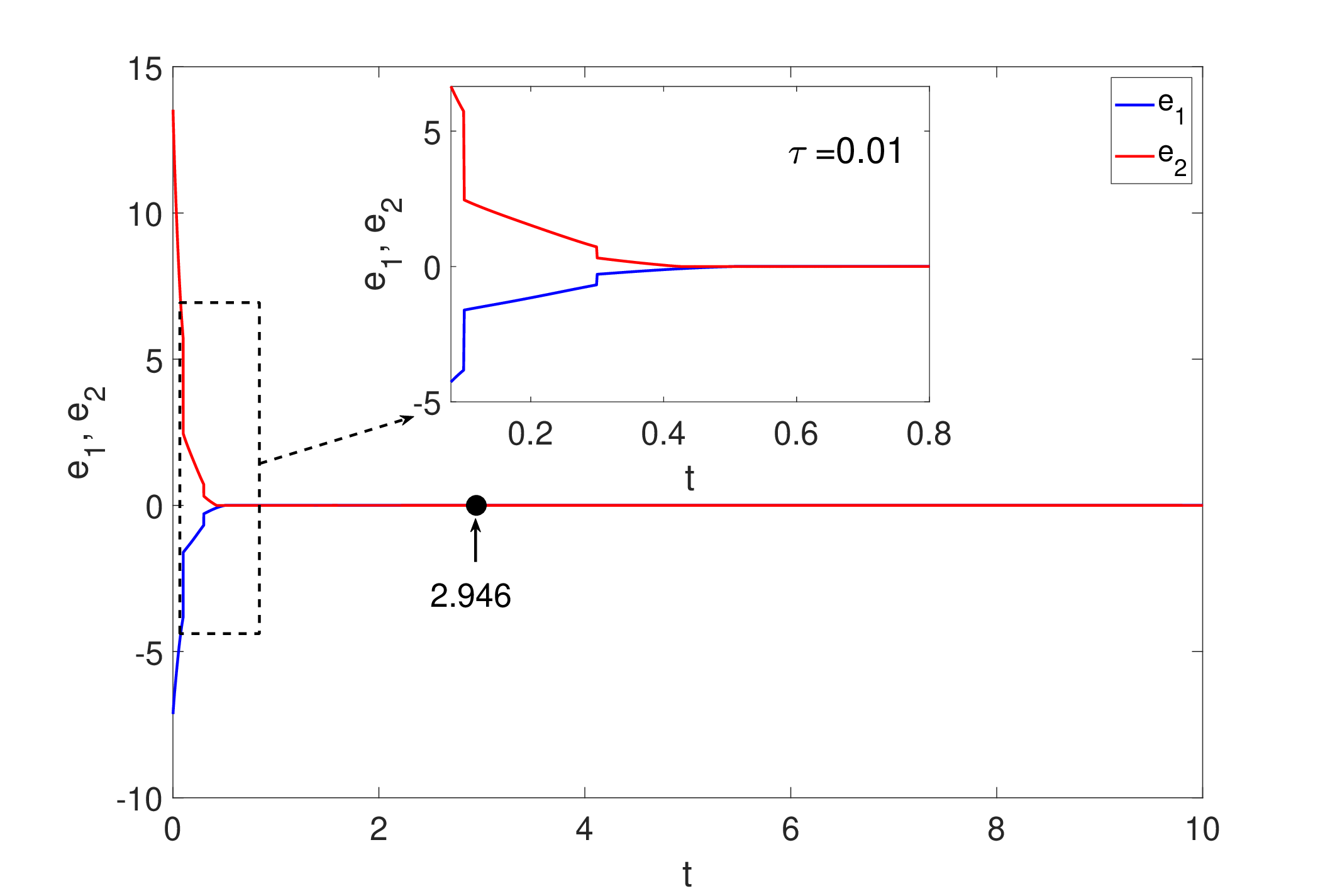}
	\end{minipage}
	\begin{center}
		\caption{Error trajectories of systems (\ref{delay-impulse-memristive-drive-system}) and (\ref{delay-impulse-memristive-response-system}) with $\tau=0.01$ in $Case\,(i)$ of Example \ref{example-nn}.}
		\label{fig-nn-stableimpulse_delay0point02}
	\end{center}
\end{figure}

Next, under the same parameters but $\tau_\jmath =0.08$ for $\jmath \in\mathbb{Z}^{+}$, the conditions in Theorem \ref{theo-memristive-stable-impulses} also hold. 
The error trajectories of systems (\ref{delay-impulse-memristive-drive-system}) and (\ref{delay-impulse-memristive-response-system}) are shown in Figure \ref{fig-nn-stableimpulse_delay0point08}. Compared with Figure \ref{fig-nn-stableimpulse_delay0point02}, \textcolor{black}{we can observe} that \textcolor{black}{impulsive delays have} a negative effect.

\begin{figure}
	\begin{minipage}[t]{1.0\linewidth}
		\centering
		\includegraphics[width=3.5in]{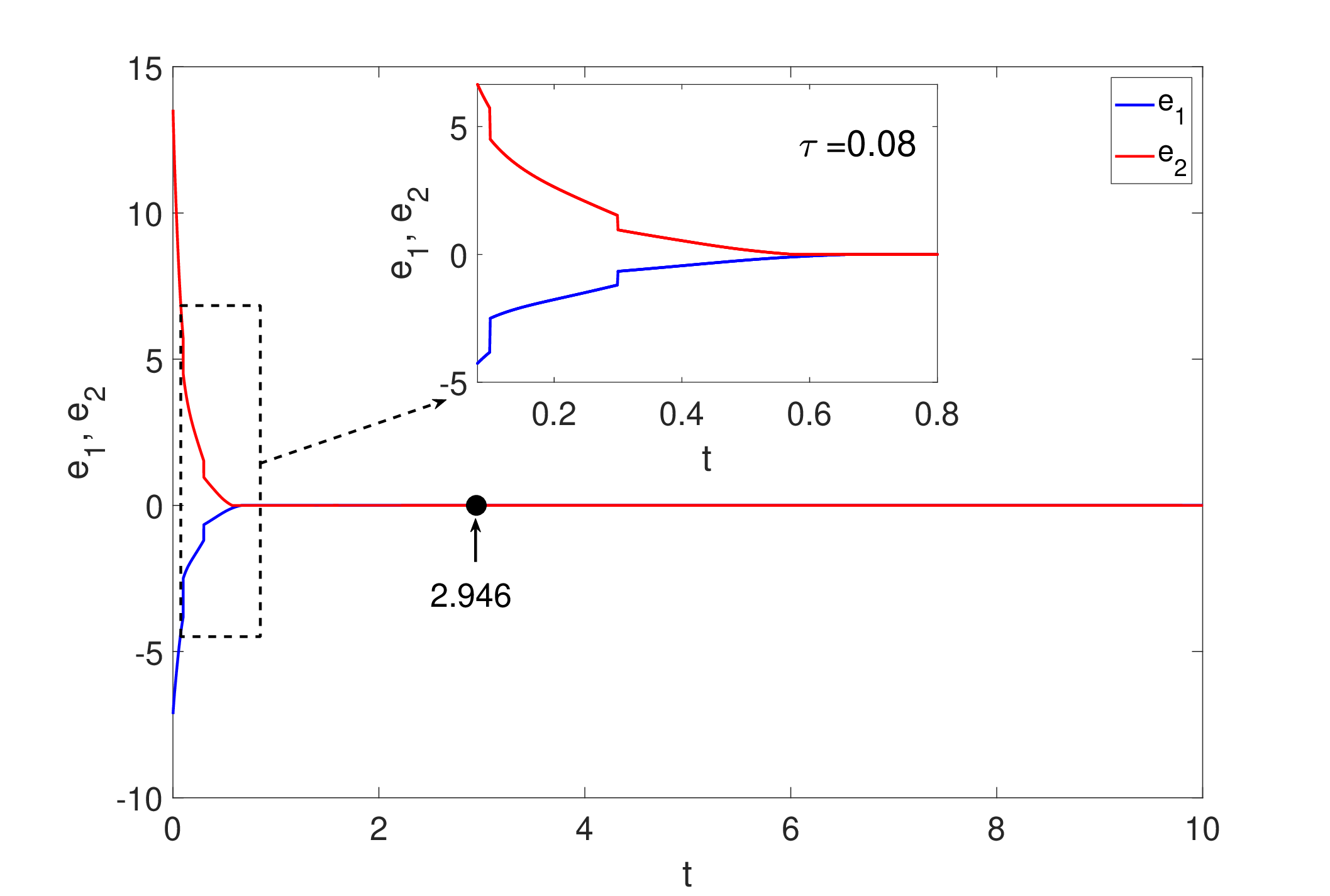}
	\end{minipage}
	\begin{center}
		\caption{Error trajectories of systems (\ref{delay-impulse-memristive-drive-system}) and (\ref{delay-impulse-memristive-response-system}) with $\tau=0.08$ in $Case\,(i)$ of Example \ref{example-nn}.}
		\label{fig-nn-stableimpulse_delay0point08}
	\end{center}
\end{figure}

$Case\,(ii)$: (Desynchronizing delayed impulses) Let $\mu _\jmath =1.38$ and $\tau_\jmath =0.005$ for $\jmath \in\mathbb{Z}^{+}$. Then, we have $\hat{\beta} =1.253$. We set $\hat{\gamma} =1.353$. Suppose the impulsive instants are $\{0.12,\,0.35,\,19,\,\\ \cdots\}$, which satisfy $(H_4^*)$ and $\hat{N}_0=3$. 
Hence, all conditions in Theorem \ref{theo-memristive-destable-impulses} \textcolor{black}{hold}. Delayed impulsive CFOMNNs (\ref{delay-impulse-memristive-drive-system}) and (\ref{delay-impulse-memristive-response-system}) can achieve finite-time synchronization with the settling time $\hat{T}_2=18.446$, which is shown in Figure \ref{fig-nn-destableimpulse_delay0point005}.
\begin{figure}
	\begin{minipage}[t]{1.0\linewidth}
		\centering
		\includegraphics[width=3.5in]{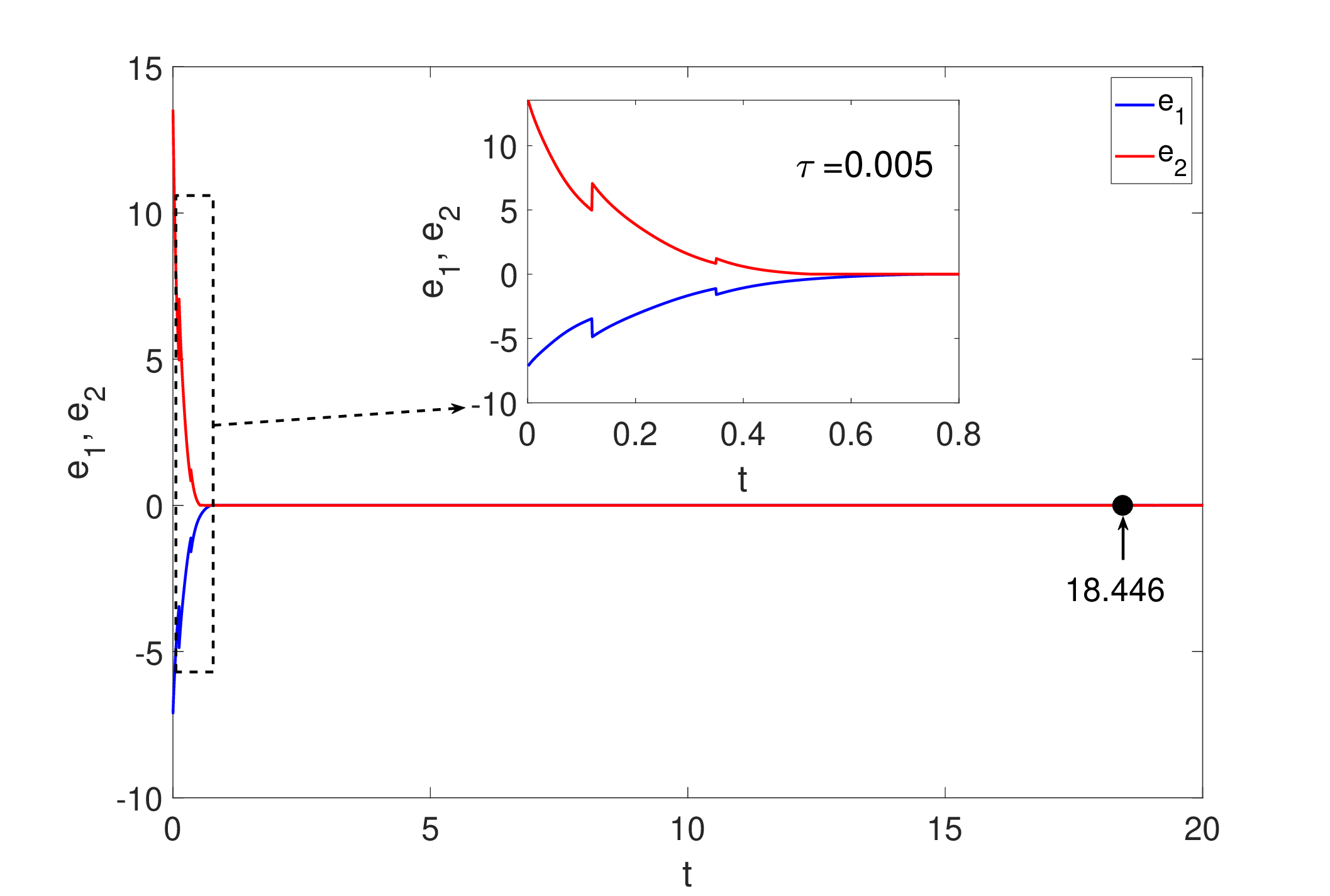}
	\end{minipage}
	\begin{center}
		\caption{Error trajectories of systems (\ref{delay-impulse-memristive-drive-system}) and (\ref{delay-impulse-memristive-response-system}) with $\tau=0.005$ in $Case\,(ii)$ of Example \ref{example-nn}.}
		\label{fig-nn-destableimpulse_delay0point005}
	\end{center}
\end{figure}

Next, under the same parameters but $\tau_\jmath =0.1$ for $\jmath \in\mathbb{Z}^{+}$, the conditions in Theorem \ref{theo-memristive-destable-impulses} also hold. 
The error trajectories of systems (\ref{delay-impulse-memristive-drive-system}) and (\ref{delay-impulse-memristive-response-system}) are shown in Figure \ref{fig-nn-destableimpulse_delay0point05}. Compared with Figure \ref{fig-nn-destableimpulse_delay0point005}, \textcolor{black}{we can observe} that \textcolor{black}{impulsive delays have} a negative effect.
\begin{figure}
	\begin{minipage}[t]{1.0\linewidth}
		\centering
		\includegraphics[width=3.5in]{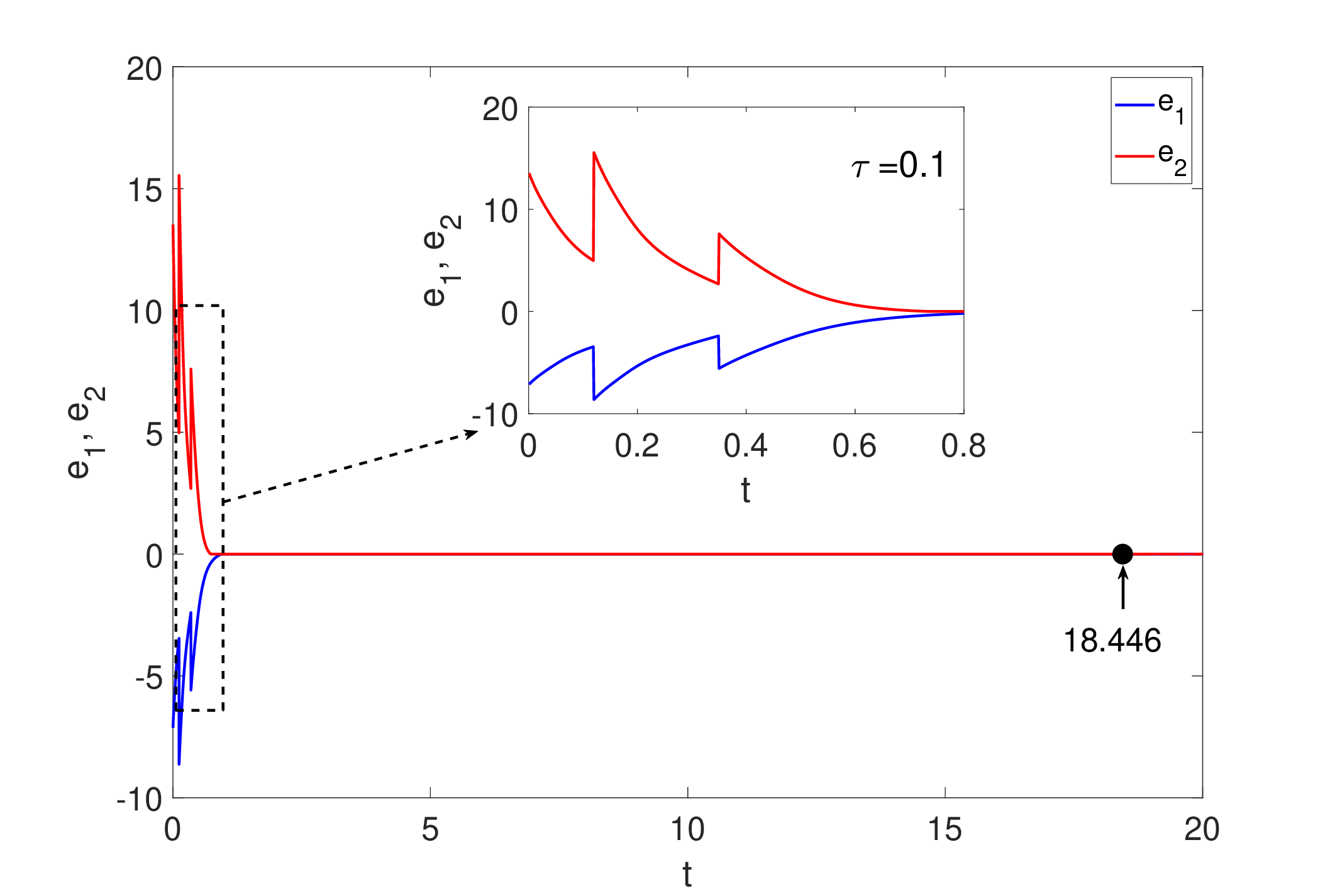}
	\end{minipage}
	\begin{center}
		\caption{Error trajectories of systems (\ref{delay-impulse-memristive-drive-system}) and (\ref{delay-impulse-memristive-response-system}) with $\tau=0.1$ in $Case\,(ii)$ of Example \ref{example-nn}.}
		\label{fig-nn-destableimpulse_delay0point05}
	\end{center}
\end{figure}

\end{example}

\section{Conclusion}\label{sec: Conclusion}
In this paper, the FTS of nonlinear CFODISs has been investigated. 
Some Lyapunov-based FTS theorems of CFODISs have been established when systems are subject to either stabilizing or destabilizing delayed impulses.
The settling time has been explicitly \textcolor{black}{estimated}, which depends on the initial values, the fractional order of systems and the impulses.
Then, some finite-time synchronization criteria of CFOMNNs with delayed impulses have been derived by applying the above theoretical results and designing a special controller. \textcolor{black}{Lastly}, simulations have been presented to \textcolor{black}{demonstrate} the \textcolor{black}{validity} of the theoretical \textcolor{black}{results}.


\section*{Declaration of Competing Interest}
The authors declare that they have no known competing financial interests or personal relationships that could have appeared to influence the work reported in this paper.

\section*{Acknowledgments}
This work was jointly supported by the National Natural Science Foundation of China (grant numbers 61503115 and 72271241) and the Natural Science Foundation of Anhui Province, China (grant number JZ2023AKZR0546).

\balance

\end{document}